\title[Congruence on tropical rational function semifield]{Congruences on tropical rational function semifields and tropical curves}
\author{Song JuAe}
\address{Department of Mathematics, Graduate School of Science, Kyoto University, Kitashirakawa Oiwake-cho, Sakyo-ku, Kyoto 606-8502, Japan.}
\email{song.juae.8m@kyoto-u.ac.jp}
\subjclass[2020]{Primary 14T10, 14T20; Secondary 15A80}
\keywords{tropical rational function semifields, congruences, congruence varieties, tropical curves}
\newtheorem{dfn}{Definition}[section]
\newtheorem{thm}[dfn]{Theorem}
\newtheorem{prop}[dfn]{Proposition}
\newtheorem{cor}[dfn]{Corollary}
\newtheorem{lemma}[dfn]{Lemma}
\newtheorem{rem}[dfn]{Remark}
\newtheorem{ex}[dfn]{Example}
\def\Gamma{\varGamma}
\begin{document}

\maketitle

\begin{abstract}
We define tropical rational function semifields $\overline{\boldsymbol{T}(X_1, \ldots, X_n)}$ and prove that a tropical curve $\Gamma$ is realized (except for points at infinity) as the congruence variety $V \subset \boldsymbol{R}^n$ associated with a congruence on $\overline{\boldsymbol{T}(X_1, \ldots, X_n)}$ by giving a specific map $\Gamma \to V$.
Also, we shed light on the relation between congruences $E$ on $\overline{\boldsymbol{T}(X_1, \ldots, X_n)}$ and congruence varieties associated with them and reveal the quotient semifield $\overline{\boldsymbol{T}(X_1, \ldots, X_n)} / E$ to play the role of coordinate rings that determine isomorphism classes of affine varieties in the classical algebraic geometry.
\end{abstract}

\section{Introduction}
	\label{section1}

Congruences on the tropical polynomial semiring in $n$-variables $\boldsymbol{T}[X_1, \ldots, X_n]$ are central objects of interest in \cite{Bertram=Easton} and \cite{Joo=Mincheva2}.
Bertram and Easton (\cite{Bertram=Easton}) aimed to construct a general framework for algebraic geometry in the tropical setting.
They first studied basic properties of semirings and congruences on them, and then a strong form of tropical Nullstellensatz for congruences on $\boldsymbol{T}[X_1, \ldots, X_n]$.
\cite{Joo=Mincheva2} was devoted to studying prime congruences on polynomial semirings on additively idempotent semifields (containing $\boldsymbol{T}[X_1, \ldots, X_n]$) and to proving an improvement of the result of Bertram and Easton for tropical Nullstellensatz (for tropical Nullstellensatz, see also \cite{Grigoriev=Podolskii}).

The current paper also aims to develop an algebraic foundation for tropical geometry, but, in addition, is interested in the relationship between algebra and geometry; congruences and congruence varieties associated with them.
In this paper, we focus on congruences on the tropical rational function semifield in $n$-variables $\overline{\boldsymbol{T}(X_1, \ldots, X_n)}$, which is the fraction semifield of the tropical polynomial function semiring in $n$-variables.

One motivation to study congruences on $\overline{\boldsymbol{T}(X_1, \ldots, X_n)}$ is the rational function semifield $\operatorname{Rat}(\Gamma)$ of a tropical curve $\Gamma$.
By \cite[Theorem 1.1]{JuAe3}, $\operatorname{Rat}(\Gamma)$ is finitely generated as a semifield over $\boldsymbol{T}$.
Using this fact, we can make a surjective $\boldsymbol{T}$-algebra homomorphism $\psi : \overline{\boldsymbol{T}(X_1, \ldots, X_n)} \twoheadrightarrow \operatorname{Rat}(\Gamma)$.
By \cite[Theorem 1.1]{JuAe4}, the geometric structure of $\Gamma$ as a tropical curve (i.e., the topological structure and the metric structure) is defined by the $\boldsymbol{T}$-algebra structure of $\operatorname{Rat}(\Gamma)$, and hence the quotient semifield $\overline{\boldsymbol{T}(X_1, \ldots, X_n)} / \operatorname{Ker}(\psi)$ of $\overline{\boldsymbol{T}(X_1, \ldots, X_n)}$ by the kernel congruence $\operatorname{Ker}(\psi)$ of $\psi$.
Since the $\boldsymbol{T}$-algebra structure of $\overline{\boldsymbol{T}(X_1, \ldots, X_n)} / \operatorname{Ker}(\psi)$ is given by $\operatorname{Ker}(\psi)$, we expect that we can extract the geometric structure of $\Gamma$ as a tropical curve from $\operatorname{Ker}(\psi)$.
In fact, we can do it: Proposition \ref{prop2}.
This proposition states that $\Gamma$ is realized except for points at infinity as the congruence variety $\boldsymbol{V}(\operatorname{Ker}(\psi))$ associated with $\operatorname{Ker}(\psi)$.
Note that we consider $\boldsymbol{V}(\operatorname{Ker}(\psi))$ not just as a topological space but as a metric space with lattice length. 

Also by this proposition, we know that $\overline{\boldsymbol{T}(X_1, \ldots, X_n)} / \operatorname{Ker}(\psi)$ plays the role of coordinate rings that determine isomorphism classes of affine varieties in the classical algebraic geometry.
From this fact, we expect that $\overline{\boldsymbol{T}(X_1, \ldots, X_n)} / E$ also plays the role of coordinate rings for any congruence $E$ on $\overline{\boldsymbol{T}(X_1, \ldots, X_n)}$.
In fact, it does: Theorem \ref{thm1} and Proposition \ref{prop3}.

The rest of this paper is organized as follows.
In Section \ref{section2}, we give the definitions of semirings, algebras and semifields, congruences, tropical curves, rational functions and chip firing moves, and morphisms between tropical curves.
Section \ref{section3} is our main section; contains the statements of Propositions \ref{prop2}, \ref{prop3} and Theorem \ref{thm1} and their proofs.
In that section, we first give the definition of $\overline{\boldsymbol{T}(X_1, \ldots, X_n)}$ and study congruences on it and congruence varieties associated with them.
Then we prove the above three assertions.
To verify whether the conditions in our assertions are sufficient or necessary in those assertions, we give three examples: Examples \ref{ex1}, \ref{ex2}, \ref{ex3}.

\section*{Acknowledgements}
The author thanks my supervisor Masanori Kobayashi and Yasuhito Nakajima for their helpful comments.

\section{Preliminaries}
	\label{section2}

In this section, we recall several definitions which we need later.
We refer to \cite{Golan} (resp. \cite{Maclagan=Sturmfels}) for an introduction to the theory of semirings (resp. tropical geometry) and employ definitions in \cite{Bertram=Easton} and \cite{Joo=Mincheva2} (resp. \cite{JuAe3}) related to semirings (resp. tropical curves).
Also we employ the Chan's definition of morphisms between tropical curves in \cite{Chan}.

\subsection{Semirings, algebras and semifields}
	\label{subsection2.1}

In this paper, a \textit{semiring} $S$ is a commutative semiring with the absorbing neutral element $0_S$ for addition $+$ and the identity $1_S$ for multiplication $\cdot$.
If every nonzero element of a semiring $S$ is multiplicatively invertible and $0_S \not= 1_S$, then $S$ is called a \textit{semifield}.

A map $\varphi : S_1 \to S_2$ between semirings is a \textit{semiring homomorphism} if for any $x, y \in S_1$,
\begin{align*}
\varphi(x + y) = \varphi(x) + \varphi(y), \	\varphi(x \cdot y) = \varphi(x) \cdot \varphi(y), \	\varphi(0) = 0, \	\text{and}\	\varphi(1) = 1.
\end{align*}
Give a semiring homomorphism $\psi : S_1 \to S_2$, if both $S_1$ and $S_2$ are semifield and $\psi$ is injective, then $S_2$ is a \textit{semifield over $S_1$}.

Given a semiring homomorphism $\varphi : S_1 \to S_2$, we call the pair $(S_2, \varphi)$ (for short, $S_2$) a \textit{$S_1$-algebra}.
For a semiring $S_1$, a map $\psi : (S_2, \varphi) \to (S_2^{\prime}, \varphi^{\prime})$ between $S_1$-algebras is a \textit{$S_1$-algebra homomorphism} if $\psi$ is a semiring homomorphism and $\varphi^{\prime} = \psi \circ \varphi$.
When there is no confusion, we write $\psi : S_2 \to S_2^{\prime}$ simply.
A bijective $S_1$-algebra homomorphism $S_2 \to S_2^{\prime}$ is a \textit{$S_1$-algebra isomorphism}.
Then $S_2$ and $S_2^{\prime}$ are said to be \textit{isomorphic}.

The set $\boldsymbol{T} := \boldsymbol{R} \cup \{ -\infty \}$ with two tropical operations:
\begin{align*}
a \oplus b := \operatorname{max}\{ a, b \} \quad	\text{and} \quad a \odot b := a + b,
\end{align*}
where $a, b \in \boldsymbol{T}$ and $a + b$ stands for the usual sum of $a$ and $b$, becomes a semifield.
Here, for any $a \in \boldsymbol{T}$, we handle $-\infty$ as follows:
\begin{align*}
a \oplus (-\infty) = (-\infty) \oplus a = a \quad \text{and} \quad a \odot (-\infty) = (-\infty) \odot a = -\infty.
\end{align*}
This triple $(\boldsymbol{T}, \odot, \oplus)$ is called the \textit{tropical semifield}.
The subset $\boldsymbol{B} := \{ 0, -\infty \}$ of $\boldsymbol{T}$ becomes a semifield with tropical operations of $\boldsymbol{T}$ and is called the \textit{boolian semifield}.
A $\boldsymbol{B}$-algebra $S$ is said to be \textit{cancellative} if whenever $x \cdot y = x \cdot z$ for some $x, y, z \in S$, then either $x = 0_S$ or $y = z$.
If $S$ is cancellative, then we can define the semifield $Q(S)$ of fractions as with in the case of integral domains.
In this case, the map $S \to Q(S); x \mapsto x/1_S$ becomes an injective $\boldsymbol{B}$-algebra homomorphism.

The \textit{tropical polynomials} are defined as in the usual way and the set of all of tropical polynomials in $n$-variables is denoted by $\boldsymbol{T}[X_1, \ldots, X_n]$.
It becomes a semiring with two tropical operations and called the \textit{tropical polynomial semiring}.
By the follwing example, we know that tropical polynomial semirings are not cancellative:

\begin{ex}
\upshape{
Let $X := X_1$.
Then we have
\begin{align*}
&~(X \oplus 0) \odot (X^{\odot 2} \oplus (-2) \odot X \oplus 0)\\
=&~ X^{\odot 3} \oplus (-2) \odot X^{\odot 2} \oplus X \oplus X^{\odot 2} \oplus (-2) \odot X \oplus 0\\
=&~ X^{\odot 3} \oplus X^{\odot 2} \oplus X \oplus 0
\end{align*}
and
\begin{align*}
&~(X \oplus 0) \odot (X^{\odot 2} \oplus (-1) \odot X \oplus 0)\\
=&~ X^{\odot 3} \oplus (-1) \odot X^{\odot 2} \oplus X \oplus X^{\odot 2} \oplus (-1) \odot X \oplus 0\\
=&~ X^{\odot 3} \oplus X^{\odot 2} \oplus X \oplus 0.
\end{align*}
}
\end{ex}

A vector $\boldsymbol{v} \in \boldsymbol{R}^n$ is \textit{primitive} if all its components are integers and their greatest common divisor is one.
When $\boldsymbol{v}$ is primitive, for $\lambda \ge 0$, the \textit{lattice length} of $\lambda \boldsymbol{v}$ is defined as $\lambda$ (cf.~\cite[Subsection 2.1]{JuAe2}).

\subsection{Congruences}
	\label{subsection2.2}

A \textit{congruence} $E$ on a semiring $S$ is a subset of $S \times S$ satisfying

$(1)$ for any $x \in S$, $(x, x) \in E$,

$(2)$ if $(x, y) \in E$, then $(y, x) \in E$,

$(3)$ if $(x, y) \in E$ and $(y, z) \in E$, then $(x, z) \in E$,

$(4)$ if $(x, y) \in E$ and $(z, w) \in E$, then $(x + z, y + w) \in E$, and

$(5)$ if $(x, y) \in E$ and $(z, w) \in E$, then $(x \cdot z, y \cdot w) \in E$.

The diagonal set of $S \times S$ is denoted by $\Delta$ and called the \textit{trivial} congruence on $S$.
It is the unique smallest congruence on $S$.
The set $S \times S$ becomes a semiring with the operations of $S$ and is a congruence on $S$.
This is called the \textit{improper} congruence on $S$.
Congruences other than the improper congruence are said to be \textit{proper}.
Quotients by congruences can be considered in the usual sense and the quotient semiring of $S$ by the congruence $E$ is denoted by $S / E$.
Then the natural surjection $\pi_E : S \twoheadrightarrow S / E$ is a semiring homomorphism.

For a semiring homomorphism $\psi : S_1 \to S_2$, the \textit{kernel congruence} $\operatorname{Ker}(\psi)$ of $\psi$ is the congruence $\{ (x, y) \in S_1 \times S_1 \,|\, \psi(x) = \psi(y) \}$.
For semirings and congruences on them, the fundamental homomorphism theorem holds (\cite[Proposition 2.4.4]{Giansiracusa=Giansiracusa2}).
Then, for the above $\pi_E$, we have $\operatorname{Ker}(\pi_E) = E$.

\subsection{Tropical curves}
	\label{subsection2.3}

In this paper, a \textit{graph} is an unweighted, undirected, finite, connected nonempty multigraph that may have loops.
For a graph $G$, the set of vertices is denoted by $V(G)$ and the set of edges by $E(G)$.
A vertex $v$ of $G$ is a \textit{leaf end} if $v$ is incident to only one edge and this edge is not a loop.
A \textit{leaf edge} is an edge of $G$ incident to a leaf end.

A \textit{tropical curve} is the topological space associated with the pair $(G, l)$ of a graph $G$ and a function $l: E(G) \to {\boldsymbol{R}}_{>0} \cup \{\infty\}$, where $l$ can take the value $\infty$ only on leaf edges, by identifying each edge $e$ of $G$ with the closed interval $[0, l(e)]$.
The interval $[0, \infty]$ is the one-point compactification of the interval $[0, \infty)$.
We regard $[0, \infty]$ not just as a topological space but as an extended metric space.
The distance between $\infty$ and any other point is infinite.
When $l(e)=\infty$, the leaf end of $e$ must be identified with $\infty$.
If $E(G) = \{ e \}$ and $l(e)=\infty$, then we can identify either leaf ends of $e$ with $\infty$.
When a tropical curve $\Gamma$ is obtained from $(G, l)$, the pair $(G, l)$ is called a \textit{model} for $\Gamma$.
There are many possible models for $\Gamma$.
We frequently identify a vertex (resp. an edge) of $G$ with the corresponding point (resp. the corresponding closed subset) of $\Gamma$.
A model $(G, l)$ is \textit{loopless} if $G$ is loopless.
For a point $x$ of a tropical curve $\Gamma$, if $x$ is identified with $\infty$, then $x$ is called a \textit{point at infinity}, else, $x$ is called a \textit{finite point}.
Let $\Gamma_{\infty}$ denote the set of all points at infinity of $\Gamma$.
If $x$ is a finite point, then the \textit{valence} $\operatorname{val}(x)$ is the number of connected components of $U \setminus \{ x \}$ with any sufficiently small connected neighborhood $U$ of $x$; if $x$ is a point at infinity, then $\operatorname{val}(x) := 1$.
We construct a model $(G_{\circ}, l_{\circ})$ called the {\it canonical model} for $\Gamma$ as follows.
Generally, we define $V(G_{\circ}) := \{ x \in \Gamma \,|\, \operatorname{val}(x) \not= 2 \}$ except for the following two cases.
When $\Gamma$ is homeomorphic to a circle $S^1$, we define $V(G_{\circ})$ as the set consisting of one arbitrary point of $\Gamma$.
When $\Gamma$ has the pair $(T, l)$ as its model, where $T$ is a tree consisting of three vertices and two edges and $l(E(T)) = \{ \infty \}$, we define $V(G_{\circ})$ as the set of two points at infinity and any finite point of $\Gamma$.
The word ``an edge of $\Gamma$" means an edge of $G_{\circ}$.

\subsection{Rational functions and chip firing moves}
	\label{subsection2.4}

Let $\Gamma$ be a tropical curve.
A continuous map $f : \Gamma \to \boldsymbol{R} \cup \{ \pm \infty \}$ is a \textit{rational function} on $\Gamma$ if $f$ is a constant function of $-\infty$ or a piecewise affine function with integer slopes, with a finite number of pieces and that can take the values $\pm \infty$ only at points at infinity.
For a finite point $x$ of $\Gamma$ and a rational function $f \in \operatorname{Rat}(\Gamma) \setminus \{ -\infty \}$, for one outgoing direction at $x$, the \textit{outgoing slope} of $f$ at $x$ is $\frac{f(y) - f(x)}{\operatorname{dist}(x,y)}$, where $\operatorname{dist}(x, y)$ denotes the distance between $x$ and $y$, with a finite point $y$ in a sufficiently small neighborhood of $x$ in the direction.
If $x$ is a point at infinity, then we regard the outgoing slope of $f$ at $x$ as the slope of $f$ from $y$ to $x$ times minus one, where $y$ is a finite point on the leaf edge incident to $x$ such that $f$ has a constant slope on the interval $(y, x)$.
In both cases, the definition of outgoing slope of $f$ at $x$ in the direction is independent of the choice of $y$.
The point $x$ is a \textit{zero} (resp. \textit{pole}) of $f$ if the sign of the sum of outgoing slopes of $f$ at $x$ is positive (resp. negative).
If $x$ is a point at infinity, then we regard the outgoing slope of $f$ at $x$ as the slope of $f$ from $y$ to $x$ times minus one, where $y$ is a finite point on the leaf edge incident to $x$ such that $f$ has a constant slope on the interval $(y, x)$.
Let $\operatorname{Rat}(\Gamma)$ denote the set of all rational functions on $\Gamma$.
For rational functions $f, g \in \operatorname{Rat}(\Gamma)$ and a point $x \in \Gamma \setminus \Gamma_{\infty}$, we define
\begin{align*}
(f \oplus g) (x) := \operatorname{max}\{f(x), g(x)\} \quad \text{and} \quad (f \odot g) (x) := f(x) + g(x).
\end{align*}
We extend $f \oplus g$ and $f \odot g$ to points at infinity to be continuous on the whole of $\Gamma$.
Then both are rational functions on $\Gamma$.
Note that for any $f \in \operatorname{Rat}(\Gamma)$, we have
\begin{align*}
f \oplus (-\infty) = (-\infty) \oplus f = f
\end{align*}
and
\begin{align*}
f \odot (-\infty) = (-\infty) \odot f = -\infty.
\end{align*}
Then $\operatorname{Rat}(\Gamma)$ becomes a semifield with these two operations.
Also, $\operatorname{Rat}(\Gamma)$ becomes a $\boldsymbol{T}$-algebra and a semifield over $\boldsymbol{T}$ with the natural inclusion $\boldsymbol{T} \hookrightarrow \operatorname{Rat}(\Gamma)$.
Note that for $f, g \in \operatorname{Rat}(\Gamma)$, the equality $f = g$ means that $f(x) = g(x)$ for any $x \in \Gamma$.

A \textit{subgraph} of a tropical curve is a closed subset of the tropical curve with a finite number of connected components.
Let $\Gamma_1$ be a subgraph of a tropical curve $\Gamma$ which has no connected components consisting of only a point at infinity, and $l$ a positive number or infinity.
The \textit{chip firing move} by $\Gamma_1$ and $l$ is defined as the rational function $\operatorname{CF}(\Gamma_1, l)(x) := - \operatorname{min}\{ \operatorname{dist}(\Gamma_1, x), l \}$ with $x \in \Gamma$.

\subsection{Morphisms between tropical curves}
	\label{subsection2.5}

Let $\varphi : \Gamma \to \Gamma^{\prime}$ be a continuous map between tropical curves.
Then $\varphi$ is a \textit{morphism} if there exist loopless models $(G, l)$ and $(G^{\prime}, l^{\prime})$ for $\Gamma$ and $\Gamma^{\prime}$, respectively, such that $\varphi$ can be regarded as a map $V(G) \cup E(G) \to V(G^{\prime}) \cup E(G^{\prime})$ satisfying $\varphi(V(G)) \subset V(G^{\prime})$ and for $e \in \varphi(E(G))$, there exists a nonnegative integer $\operatorname{deg}_e(\varphi)$ such that for any points $x$ and $y$ of $e$, the equality $\operatorname{dist}_{\varphi(e)}(\varphi (x), \varphi (y)) = \operatorname{deg}_e(\varphi) \cdot \operatorname{dist}_e(x, y)$ holds, where $\operatorname{dist}_{\varphi(e)}(\varphi(x), \varphi(y))$ (resp. $\operatorname{dist}_e(x, y)$) denotes the distance between $\varphi(x)$ and $\varphi(y)$ in $\varphi(e)$ (resp. $x$ and $y$ in $e$).
A map between tropical curves $\varphi : \Gamma \to \Gamma^{\prime}$ is an \textit{isomorphism} if $\varphi$ is a bijective morphism and the inverse map $\varphi^{-1}$ of $\varphi$ is also  a morphism.
By this definition, $\varphi$ is an isomorphism if and only is $\varphi$ is continuous on the whole of $\Gamma$ and is a local isometry on $\Gamma \setminus \Gamma_{\infty}$.

\section{Main results}
	\label{section3}

In this section, tropical rational function semifields and congruences on them play a central role.
We start our consideration by defining tropical rational function semifields.

For $V \subset \boldsymbol{T}^n$, we define
\begin{align*}
\boldsymbol{E}(V)_0 := \{ (f, g) \in \boldsymbol{T}[X_1, \ldots, X_n] \times \boldsymbol{T}[X_1, \ldots, X_n] \,|\, \forall x \in V, f(x) = g(x) \}.
\end{align*}
Then $\boldsymbol{E}(V)_0$ is a congruence on $\boldsymbol{T}[X_1, \ldots, X_n]$.

\begin{lemma}
	\label{lemma0}
The quotient $\boldsymbol{T}[X_1, \ldots, X_n] / \boldsymbol{E}(\boldsymbol{T}^n)_0$ is cancellative.
\end{lemma}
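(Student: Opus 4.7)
The plan is to unwind what cancellativity means for the quotient and then reduce the problem from $\boldsymbol{T}^n$ to $\boldsymbol{R}^n$, where tropical polynomials always take finite values.

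Let $S := \boldsymbol{T}[X_1, \ldots, X_n] / \boldsymbol{E}(\boldsymbol{T}^n)_0$ and suppose that $[f] \odot [g] = [f] \odot [h]$ in $S$ with $[f] \neq 0_S$. Picking polynomial representatives $f, g, h$, this translates by the definition of $\boldsymbol{E}(\boldsymbol{T}^n)_0$ into the functional identity $f(x) + g(x) = f(x) + h(x)$ for every $x \in \boldsymbol{T}^n$, where the sum is the usual sum in $\boldsymbol{R} \cup \{-\infty\}$, and the condition $[f] \neq 0_S$ says that $f$ is not identically $-\infty$ on $\boldsymbol{T}^n$.

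First I would observe that any tropical polynomial $f = \bigoplus_{\alpha} c_\alpha \odot X^{\odot \alpha}$ that is not the zero polynomial must have at least one coefficient $c_\alpha \in \boldsymbol{R}$; otherwise the function would be identically $-\infty$. For $x \in \boldsymbol{R}^n$ this monomial contributes the finite quantity $c_\alpha + \alpha \cdot x$, hence $f(x) \in \boldsymbol{R}$ for every $x \in \boldsymbol{R}^n$. Therefore on $\boldsymbol{R}^n$ the relation $f(x) + g(x) = f(x) + h(x)$ can be cancelled in $\boldsymbol{R} \cup \{-\infty\}$ to give $g(x) = h(x)$.

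Next I would propagate this equality from $\boldsymbol{R}^n$ to all of $\boldsymbol{T}^n$ using continuity. Under the natural topology on $\boldsymbol{T} = \boldsymbol{R} \cup \{-\infty\}$ (in which $-\infty$ is the limit of sequences going to $-\infty$ in $\boldsymbol{R}$, analogous to the one-point compactification viewpoint adopted in the paper), the subset $\boldsymbol{R}^n$ is dense in $\boldsymbol{T}^n$, and any tropical polynomial function is continuous on $\boldsymbol{T}^n$. Since $g$ and $h$ are continuous and coincide on the dense subset $\boldsymbol{R}^n$, we conclude $g(x) = h(x)$ for every $x \in \boldsymbol{T}^n$, that is $[g] = [h]$ in $S$. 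This establishes cancellativity.

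The steps are essentially routine; the only mild subtlety is making sure that the only way $[f] = 0_S$ can fail is that $f$ has a genuine monomial with a finite coefficient, which is what guarantees finiteness of $f$ on the dense subset $\boldsymbol{R}^n$. The density/continuity argument handles the boundary points where $f$ could legitimately vanish (i.e., take the value $-\infty$) without allowing $g$ and $h$ to disagree there.
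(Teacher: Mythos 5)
Your argument is correct, but it is genuinely different from what the paper does: the paper disposes of Lemma \ref{lemma0} by citation alone, invoking \cite[Theorem 1]{Bertram=Easton} together with \cite[Proposition 5.5 and Theorem 4.14(v)]{Joo=Mincheva2}, i.e.\ it outsources cancellativity to the general structure theory of (prime) congruences on additively idempotent semirings developed in those references. You instead give a direct, self-contained verification: reduce $[f]\odot[g]=[f]\odot[h]$ to the pointwise identity $f(x)+g(x)=f(x)+h(x)$, note that $[f]\neq 0_S$ forces $f$ to have a finite coefficient and hence to be real-valued on all of $\boldsymbol{R}^n$, cancel there, and then push the equality $g=h$ from $\boldsymbol{R}^n$ to $\boldsymbol{T}^n$ by density and continuity. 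Each step checks out; in particular the cancellation in $\boldsymbol{R}\cup\{-\infty\}$ is valid when the common summand is finite even if $g(x)$ or $h(x)$ equals $-\infty$, and the continuity of tropical polynomial functions on $\boldsymbol{T}^n$ holds because the exponents are nonnegative (if you want to avoid invoking a topology on $\boldsymbol{T}$, which the paper never fixes, you can replace the density argument by the explicit computation that $g(p)=\lim_{t\to-\infty}g(p^{(t)})$, where $p^{(t)}$ substitutes $t$ for every $-\infty$ coordinate of $p$: the monomials involving a variable indexed in $\{i : p_i=-\infty\}$ drop out of the maximum for $t\ll 0$). What your approach buys is transparency and independence from the cited machinery; what the paper's approach buys is brevity and consistency with the framework of \cite{Bertram=Easton} and \cite{Joo=Mincheva2} that it leans on elsewhere.
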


\begin{proof}
It follows from \cite[Theorem 1]{Bertram=Easton} and \cite[Proposition 5.5 and Theorem 4.14(v)]{Joo=Mincheva2}.
\end{proof}

\begin{dfn}
	\label{dfn1}
\upshape{
Let $\overline{\boldsymbol{T}[X_1, \ldots, X_n]}$ denote $\boldsymbol{T}[X_1, \ldots, X_n] / \boldsymbol{E}(\boldsymbol{T}^n)_0$.
We call it the \textit{tropical polynomial function semiring}.
By Lemma \ref{lemma0}, the semifield of fractions of $\overline{\boldsymbol{T}[X_1, \ldots, X_n]}$ is given.
We call it the \textit{tropical rational function semifield} and write it $\overline{\boldsymbol{T}(X_1, \ldots, X_n)}$.
}
\end{dfn}

\begin{rem}
\upshape{
We employed the notation in \cite[3.4. Example]{Giansiracusa=Giansiracusa2} as our notation of the tropical polynomial function semiring $\overline{\boldsymbol{T}[X_1, \ldots, X_n]}$ in Definition \ref{dfn1}.
Other authors use other notations; e.g., $\operatorname{Poly}[\boldsymbol{T}^n]$ in \cite{Bertram=Easton} and $\boldsymbol{T}[\boldsymbol{X}^{\pm}]_{\operatorname{fcn}}$ (exactly, it is not the tropical polynomial function semiring and is the tropical Laurent polynomial semiring) in \cite{Ito}.
The notation of the tropical rational function semifield follows that of the tropical polynomial function semiring and is first defined in this paper.
}
\end{rem}

In what follows, by abuse of notation, we write the image of $X_i$ in $\overline{\boldsymbol{T}(X_1, \ldots, X_n)}$ as $X_i$.

\begin{lemma}
	\label{lemma2}
Let $E$ be a congruence on $\overline{\boldsymbol{T}(X_1, \ldots, X_n)}$.
Then the following are equivalent:

$(1)$ if $(f, -\infty) \in E$, then $f = -\infty$;

$(2)$ $\overline{\boldsymbol{T}(X_1, \ldots, X_n)} / E$ is a semifield; and

$(3)$ $E$ is a semifield.
\end{lemma}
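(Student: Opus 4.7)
Write $S := \overline{\boldsymbol{T}(X_1, \ldots, X_n)}$ for brevity, with $0_S = -\infty$ and $1_S = 0$. I would organize the proof around a single recurring trick: since $S$ is a semifield, any $f \in S$ with $f \neq -\infty$ is multiplicatively invertible, so multiplying a pair $(f, -\infty) \in E$ by the diagonal element $(f^{-1}, f^{-1}) \in E$ produces $(0, -\infty) \in E$. The whole equivalence collapses around this observation, and my plan is to establish $(1) \Leftrightarrow (2)$ first and then $(1) \Leftrightarrow (3)$.

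For $(1) \Leftrightarrow (2)$: the quotient $S/E$ is automatically a semiring, so what I need to check is $0_{S/E} \neq 1_{S/E}$ together with invertibility of nonzero classes. The first observation is that $[f]_E \neq [-\infty]_E$ forces $f \neq -\infty$ (otherwise $f$ would lie in the class of $-\infty$), so $[f^{-1}]_E$ serves as an inverse of $[f]_E$ regardless of any hypothesis on $E$. Hence $S/E$ is a semifield if and only if $(0, -\infty) \notin E$. Condition $(1)$ applied to $f = 0$ delivers this immediately, giving $(1) \Rightarrow (2)$. Conversely, if $(f, -\infty) \in E$ for some $f \neq -\infty$, the trick above forces $(0, -\infty) \in E$, i.e.\ $1_{S/E} = 0_{S/E}$, contradicting that $S/E$ is a semifield.

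For $(1) \Leftrightarrow (3)$: I first unpack what ``$E$ is a semifield'' means. A congruence $E$ is automatically a subsemiring of the product semiring $S \times S$, since it contains the diagonal and is closed under componentwise $+$ and $\cdot$ by axioms $(4)$ and $(5)$ of Subsection~\ref{subsection2.2}, and its zero $(-\infty, -\infty)$ and unit $(0, 0)$ are distinct. So $E$ is a semifield if and only if every $(a, b) \in E$ with $(a, b) \neq (-\infty, -\infty)$ has a multiplicative inverse lying in $E$. Assuming $(1)$, both $a$ and $b$ are finite (the ``$b \neq -\infty$'' half uses the symmetric form of $(1)$, which is free from the symmetry of $E$); multiplying $(a, b) \in E$ by the diagonal element $(a^{-1} b^{-1}, a^{-1} b^{-1}) \in E$ yields $(b^{-1}, a^{-1}) \in E$, and symmetry of $E$ then gives $(a^{-1}, b^{-1}) \in E$, as needed. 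Conversely, if $(f, -\infty) \in E$ with $f \neq -\infty$, then $(f, -\infty)$ is a nonzero element of $E$ that cannot be invertible in $S \times S$ (since $-\infty$ has no multiplicative inverse in $S$), contradicting $(3)$.

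The argument is essentially bookkeeping and I do not foresee a real obstacle; the only mildly nontrivial step is verifying closure of $E$ under componentwise inversion in $(1) \Rightarrow (3)$, where it is not purely formal that $(a^{-1}, b^{-1}) \in E$ — one must invoke the congruence structure by multiplying by the appropriate diagonal element. Everything else reduces to the single observation that $(f, -\infty) \in E$ with $f \neq -\infty$ propagates to $(0, -\infty) \in E$.
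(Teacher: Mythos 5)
Your proof is correct and follows essentially the same route as the paper: the same decomposition into $(1)\Leftrightarrow(2)$ and $(1)\Leftrightarrow(3)$, and the same key device of multiplying a pair in $E$ by a suitable diagonal element $(h,h)\in E$ to produce $(0,-\infty)$ or the componentwise inverse $(f^{\odot(-1)},g^{\odot(-1)})$. The only cosmetic difference is that you isolate the criterion ``$S/E$ is a semifield iff $(0,-\infty)\notin E$'' where the paper instead observes directly that the quotient collapses to a single class.
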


\begin{proof}
The quotient $\overline{\boldsymbol{T}(X_1, \ldots, X_n)} / E$ is a semiring.
Let $\pi_E : \overline{\boldsymbol{T}(X_1, \ldots, X_n)} \twoheadrightarrow \overline{\boldsymbol{T}(X_1, \ldots, X_n)} / E$ be the natural surjective semiring homomorphism.
Remark that for $f, g \in \overline{\boldsymbol{T}(X_1, \ldots, X_n)}$, $(f, g) \in E$ if and only if $\pi_E(f) = \pi_E(g)$.

$(1) \Rightarrow (2)$ : by $(1)$, we have $\pi_E(0) \not= \pi_E(-\infty)$.
Let $f \in \overline{\boldsymbol{T}(X_1, \ldots, X_n)}$ be such that $\pi_E(f) \not= \pi_E(-\infty)$.
By $(1)$, $f \not= -\infty$, and thus $f^{\odot(-1)} \not= -\infty$.
Hence we have $\pi_E(f) \odot \pi_E(f^{\odot(-1)}) = \pi_E(f \odot f^{\odot(-1)}) = \pi_E(0) \not= \pi_E(-\infty)$.
This means that $\overline{\boldsymbol{T}(X_1, \ldots, X_n)} / E$ is a semifield.

$(2) \Rightarrow (1)$ : we assume that there exists $f \in \overline{\boldsymbol{T}(X_1, \ldots, X_n)} \setminus \{ -\infty \}$ such that $(f, -\infty) \in E$.
Since $(0, -\infty) = (f^{\odot(-1)} \odot f, f^{\odot(-1)} \odot (-\infty)) \in E$, for any $g \in \overline{\boldsymbol{T}(X_1, \ldots, X_n)}$, we have $(g, -\infty) = (g \odot 0, g \odot (-\infty)) \in E$.
This means that $\overline{\boldsymbol{T}(X_1, \ldots, X_n)} / E = \{ \pi_E(-\infty) \}$ and hence it is not a semifield.

$(1) \Rightarrow (3)$ : clearly $(0, 0) \not= (-\infty, -\infty)$.
For any $(f, g) \in E \setminus \{ (-\infty, -\infty) \}$, since $f \not= -\infty$ and $g \not= -\infty$ by $(1)$, we have $f^{\odot(-1)} \not= -\infty$ and $g^{\odot(-1)} \not= -\infty$.
Thus we have $(g^{\odot(-1)}, f^{\odot(-1)}) = (f \odot f^{\odot(-1)} \odot g^{\odot(-1)}, g \odot f^{\odot(-1)} \odot g^{\odot(-1)}) \in E$, and hence $(f^{\odot(-1)}, g^{\odot(-1)}) \in E$.
Since $(0, 0) = (f \odot f^{\odot(-1)}, g \odot g^{\odot(-1)})$, we have $(f, g)^{\odot(-1)} = (f^{\odot(-1)}, g^{\odot(-1)}) \in E$.
This means that $E$ is a semifield.

$(3) \Rightarrow (1)$ : we assume that there exists $f \in \overline{\boldsymbol{T}(X_1, \ldots, X_n)} \setminus \{ -\infty \}$ such that $(f, -\infty) \in E$.
Then, for any $g, h \in \overline{\boldsymbol{T}(X_1, \ldots, X_n)}$, since $(f \odot g, -\infty \odot h) = (f \odot g, -\infty) \not= (0, 0)$, this $(f, -\infty)$ has no inverse element for $\odot$ in $E$.
Hence $E$ is not a semifield.
\end{proof}

\begin{dfn}$($\upshape{cf.~}\cite[Subsection 3.1]{Bertram=Easton}$)$
\upshape{
For a congruence $E$ on $\overline{\boldsymbol{T}(X_1, \ldots, X_n)}$, we define
\begin{align*}
\boldsymbol{V}(E) := \{ x \in \boldsymbol{R}^n \,|\, \forall (f, g) \in E, f(x) = g(x)\}.
\end{align*}
We call $\boldsymbol{V}(E)$ the \textit{congruence variety} associated with $E$.
}
\end{dfn}

\begin{lemma}
	\label{lemma1}
Let $E$ be a congruence on $\overline{\boldsymbol{T}(X_1, \ldots, X_n)}$.
If there exist $f \in \overline{\boldsymbol{T}(X_1, \ldots, X_n)} \setminus \{ -\infty \}$ and $t \in \boldsymbol{T} \setminus \{ 0 \}$ such that $(f, f \odot t) \in E$, then $\boldsymbol{V}(E) = \varnothing$.
\end{lemma}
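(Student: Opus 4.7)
The plan is to derive a contradiction from the assumption $\boldsymbol{V}(E) \neq \varnothing$ by exploiting the fact that $\overline{\boldsymbol{T}(X_1, \ldots, X_n)}$ is a semifield, so $f \neq -\infty$ is equivalent to $f$ being invertible for $\odot$.

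Suppose toward contradiction that there is some $x \in \boldsymbol{V}(E)$. Since $f \neq -\infty = 0_{\overline{\boldsymbol{T}(X_1,\ldots,X_n)}}$, the inverse $f^{\odot(-1)}$ exists in $\overline{\boldsymbol{T}(X_1, \ldots, X_n)}$. By reflexivity (property (1) of a congruence), $(f^{\odot(-1)}, f^{\odot(-1)}) \in E$, and so by the multiplicative compatibility (property (5)) applied to $(f, f \odot t) \in E$, I would deduce
\begin{align*}
(0, t) \;=\; \bigl(f \odot f^{\odot(-1)},\; f \odot t \odot f^{\odot(-1)}\bigr) \;\in\; E,
\end{align*}
where $0 = 1_{\boldsymbol{T}}$ denotes the multiplicative identity.

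Now the definition of $\boldsymbol{V}(E)$ forces the two sides of any pair in $E$ to agree as functions at $x$. Applied to $(0, t)$, which viewed in $\overline{\boldsymbol{T}(X_1, \ldots, X_n)}$ are the two constant functions with values $0$ and $t$ respectively, this gives $0 = t$, contradicting the hypothesis $t \neq 0$. Hence no such $x$ exists and $\boldsymbol{V}(E) = \varnothing$.

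I do not expect any serious obstacle, since the whole argument is a one-step application of the semifield axioms combined with the definition of the congruence variety. The only point that needs a brief remark is the case $t = -\infty$: the reduction still produces the pair $(0, -\infty) \in E$, and evaluating the constant functions $0$ and $-\infty$ at $x \in \boldsymbol{R}^n$ yields the impossible equality $0 = -\infty$ in $\boldsymbol{T}$, so the same contradiction is reached uniformly. Note that one does not need to worry about whether $f(x)$ itself is finite, because multiplying by $f^{\odot(-1)}$ in the semifield clears $f$ out of the pair before evaluation.
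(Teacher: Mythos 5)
Your proof is correct; the paper itself dismisses this lemma with ``It is clear by the definition of congruence varieties,'' and your argument is exactly the intended one-step verification. Your reduction to the pair $(0, t) \in E$ before evaluating is a slightly tidier route than evaluating $f$ directly, since it sidesteps having to observe that a nonzero element of $\overline{\boldsymbol{T}(X_1, \ldots, X_n)}$ takes values in $\boldsymbol{R}$ at every point of $\boldsymbol{R}^n$.
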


\begin{proof}
It is clear by the definition of congruence varieties.
\end{proof}

\begin{lemma}
	\label{lemma3}
Let $E$ be a congruence on $\overline{\boldsymbol{T}(X_1, \ldots, X_n)}$.
If $\overline{\boldsymbol{T}(X_1, \ldots, X_n)} / E$ is not a semifield over $\boldsymbol{T}$, then $\boldsymbol{V}(E) = \varnothing$.
\end{lemma}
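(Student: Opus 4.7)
The plan is to unpack the phrase ``$\overline{\boldsymbol{T}(X_1,\ldots,X_n)}/E$ is not a semifield over $\boldsymbol{T}$'' according to the definition in Subsection \ref{subsection2.1}: this fails precisely when either (i) the quotient is not a semifield, or (ii) the quotient is a semifield but the composed semiring homomorphism $\boldsymbol{T} \hookrightarrow \overline{\boldsymbol{T}(X_1,\ldots,X_n)} \twoheadrightarrow \overline{\boldsymbol{T}(X_1,\ldots,X_n)}/E$ is not injective. I will show $\boldsymbol{V}(E)=\varnothing$ in each case, mostly by reducing to Lemmas \ref{lemma2} and \ref{lemma1}.

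In Case (i), Lemma \ref{lemma2} (the failure of $(2)$, hence of $(1)$) supplies some $f \in \overline{\boldsymbol{T}(X_1,\ldots,X_n)} \setminus \{-\infty\}$ with $(f,-\infty) \in E$. Mimicking the argument of ``$(2) \Rightarrow (1)$'' in the proof of Lemma \ref{lemma2}, multiplication by $f^{\odot(-1)}$ (which is defined since $f \neq -\infty$) yields $(0,-\infty) \in E$. For any $x \in \boldsymbol{R}^n$, the constants $0$ and $-\infty$ evaluate to $0$ and $-\infty$ respectively, so no point of $\boldsymbol{R}^n$ can lie in $\boldsymbol{V}(E)$.

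In Case (ii), the failure of injectivity produces constants $s, t \in \boldsymbol{T}$ with $s \neq t$ and $(s,t) \in E$. Since the quotient is a semifield, Lemma \ref{lemma2}$(1)$ rules out $s = -\infty$ or $t = -\infty$ (otherwise the other would be $-\infty$ too, contradicting $s \neq t$). Hence $s, t \in \boldsymbol{R}$, and setting $r := t \odot s^{\odot(-1)} \in \boldsymbol{T} \setminus \{0\}$ we obtain $(s, s \odot r) = (s,t) \in E$ with $s \neq -\infty$. Lemma \ref{lemma1} applies and gives $\boldsymbol{V}(E) = \varnothing$.

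The whole proof is thus a short case analysis, and I do not anticipate a genuine obstacle; the only point requiring care is to treat Case (ii) separately from Case (i) rather than conflating them, and to verify in Case (ii) that the equivalence provided by Lemma \ref{lemma2} really forces both $s$ and $t$ to lie in $\boldsymbol{R}$ before invoking Lemma \ref{lemma1}.
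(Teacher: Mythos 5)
Your proposal is correct and follows essentially the same route as the paper: split into the case where the quotient fails to be a semifield (reduce via Lemma \ref{lemma2} to some $(f,-\infty)\in E$ with $f\neq-\infty$) and the case where it is a semifield but the constants are not embedded injectively (reduce to Lemma \ref{lemma1} via $t\odot s^{\odot(-1)}\neq 0$). The only cosmetic differences are that the paper handles the first case by writing $(f,-\infty)=(f,f\odot(-\infty))$ and invoking Lemma \ref{lemma1} with $t=-\infty$ rather than evaluating $(0,-\infty)$ directly, and in the second case it only normalizes $t_1\neq-\infty$ instead of showing both constants are real.
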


\begin{proof}
If $\overline{\boldsymbol{T}(X_1, \ldots, X_n)} / E$ is not a semifield, then, by Lemma \ref{lemma2}, there exists $f \in \overline{\boldsymbol{T}(X_1, \ldots, X_n)} \setminus \{ -\infty \}$ such that $(f, -\infty) \in E$.
Since $(f, -\infty) = (f, f \odot (-\infty)) \in E$, by Lemma \ref{lemma1}, $\boldsymbol{V}(E) = \varnothing$.

If $\overline{\boldsymbol{T}(X_1, \ldots, X_n)} / E$ is not a semifield over $\boldsymbol{T}$ but a semifield, then there exist $t_1 \not= t_2 \in \boldsymbol{T}$ such that $(t_1, t_2) \in E$.
Without loss of generality, we can assume that $t_1 \not= -\infty$.
Then $(0, 0 \odot t_2 \odot t_1^{\odot (-1)}) = (0 \odot t_1 \odot t_1^{\odot (-1)}, 0 \odot t_2 \odot t_1^{\odot (-1)}) \in E$.
Since $t_2 \odot t_1^{\odot (-1)} \not= 0$, we have $\boldsymbol{V}(E) = \varnothing$ by Lemma \ref{lemma1}.
\end{proof}

For $(f_1, f_2), (g_1, g_2) \in \overline{\boldsymbol{T}(X_1, \ldots, X_n)} \times \overline{\boldsymbol{T}(X_1, \ldots, X_n)}$, we define the \textit{twisted product} $(f_1, f_2) \rtimes (g_1, g_2) := (f_1 \odot g_1 \oplus f_2 \odot g_2, f_1 \odot g_2 \oplus f_2 \odot g_1)$ (cf. \cite{Bertram=Easton}, \cite{Joo=Mincheva2}).
For congruences $E$ and $F$ on $\overline{\boldsymbol{T}(X_1, \ldots, X_n)}$, we define $E \rtimes F$ as the congruence on $\overline{\boldsymbol{T}(X_1, \ldots, X_n)}$ generated by $\{ (f_1, f_2) \rtimes (g_1, g_2) \,|\, (f_1, f_2) \in E, (g_1, g_2) \in F \}$, i.e., the smallest congruence containing $\{ (f_1, f_2) \rtimes (g_1, g_2) \,|\, (f_1, f_2) \in E, (g_1, g_2) \in F \}$.

\begin{prop}$($\upshape{cf.~}\cite[Lemma 3.1]{Bertram=Easton}$)$
	\label{prop0}
For $\lambda \in \Lambda$, let $E_{\lambda}$ be a semiring congruence on $\overline{\boldsymbol{T}(X_1, \ldots, X_n)}$.
Let $E$ be the congruence on $\overline{\boldsymbol{T}(X_1, \ldots, X_n)}$ generated by $\bigcup_{\lambda \in \Lambda} E_{\lambda}$.
Let $F_1$ and $F_2$ be congruences on $\overline{\boldsymbol{T}(X_1, \ldots, X_n)}$.
Then the following hold:

$(1)$ $\boldsymbol{V}(\Delta) = \boldsymbol{R}^n$ and $\boldsymbol{V}(\overline{\boldsymbol{T}(X_1, \ldots, X_n)} \times \overline{\boldsymbol{T}(X_1, \ldots, X_n)}) = \varnothing$, 

$(2)$ $\boldsymbol{V}(E) = \bigcap_{\lambda \in \Lambda} \boldsymbol{V}(E_{\lambda})$, and

$(3)$ $\boldsymbol{V}(F_1 \rtimes F_2) = \boldsymbol{V}(F_1) \cup \boldsymbol{V}(F_2)$.
\end{prop}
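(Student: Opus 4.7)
The plan is to handle the three assertions in sequence, with the substance living in part (3).

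Part (1) is immediate: the trivial congruence $\Delta$ consists only of pairs $(f, f)$, which always satisfy $f(x) = f(x)$, while the improper congruence contains $(0, -\infty)$, and no $x \in \boldsymbol{R}^n$ can give $0 = -\infty$. For part (2), the inclusion $E_\lambda \subset E$ yields $\boldsymbol{V}(E) \subset \bigcap_\lambda \boldsymbol{V}(E_\lambda)$ at once. For the reverse, I would fix $x \in \bigcap_\lambda \boldsymbol{V}(E_\lambda)$ and observe that the set
\begin{align*}
E_x := \{ (f, g) \in \overline{\boldsymbol{T}(X_1, \ldots, X_n)} \times \overline{\boldsymbol{T}(X_1, \ldots, X_n)} \,|\, f(x) = g(x) \}
\end{align*}
is itself a congruence, since evaluation at $x$ is a semifield homomorphism $\overline{\boldsymbol{T}(X_1, \ldots, X_n)} \to \boldsymbol{T}$. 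Because $E_x$ contains every $E_\lambda$, it contains the congruence $E$ generated by their union; so $x \in \boldsymbol{V}(E)$.

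Part (3) splits into two inclusions. For $\boldsymbol{V}(F_1) \cup \boldsymbol{V}(F_2) \subset \boldsymbol{V}(F_1 \rtimes F_2)$, take $x \in \boldsymbol{V}(F_1)$ (the other case is symmetric). For every generator $(f_1 \odot g_1 \oplus f_2 \odot g_2, f_1 \odot g_2 \oplus f_2 \odot g_1)$ of $F_1 \rtimes F_2$ with $(f_1, f_2) \in F_1$ and $(g_1, g_2) \in F_2$, the equality $f_1(x) = f_2(x)$ makes both components equal to $f_1(x) \odot (g_1(x) \oplus g_2(x))$; thus the generators lie in $E_x$, and so does the congruence $F_1 \rtimes F_2$ they generate, giving $x \in \boldsymbol{V}(F_1 \rtimes F_2)$.

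For the harder reverse inclusion, take $x$ outside both $\boldsymbol{V}(F_1)$ and $\boldsymbol{V}(F_2)$; I must exhibit a pair in $F_1 \rtimes F_2$ whose coordinates disagree at $x$. Choose witnesses $(f_1, f_2) \in F_1$ and $(g_1, g_2) \in F_2$ with $f_1(x) \neq f_2(x)$ and $g_1(x) \neq g_2(x)$. After swapping labels, assume $f_1(x) > f_2(x)$. The twisted product itself will work: if $g_1(x) > g_2(x)$, then the first coordinate evaluates to $f_1(x) + g_1(x)$, which strictly exceeds both summands in the second coordinate; if $g_1(x) < g_2(x)$, then the second coordinate evaluates to $f_1(x) + g_2(x)$, which strictly exceeds both summands in the first coordinate. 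Either way the two coordinates differ at $x$.

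The main obstacle is this tropical case analysis together with the bookkeeping when $-\infty$ values arise. A non-$-\infty$ element of $\overline{\boldsymbol{T}(X_1, \ldots, X_n)}$ evaluates to a finite real at every $x \in \boldsymbol{R}^n$, so the only way $-\infty$ enters a witness pair is when one component is identically $-\infty$ as a rational function. In those degenerate cases the twisted product collapses to a simpler pair such as $(f_2 \odot g_2, f_2 \odot g_1)$, whose coordinates still differ at $x$ thanks to $g_1(x) \neq g_2(x)$, so the argument survives.
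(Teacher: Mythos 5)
Your proof is correct and takes essentially the same route as the paper's: parts (1) and (2) are handled identically, and the substance of (3) is the same tropical case analysis on the twisted product, which you run in contrapositive form ($x \notin \boldsymbol{V}(F_1)$ and $x \notin \boldsymbol{V}(F_2)$ imply $x \notin \boldsymbol{V}(F_1 \rtimes F_2)$) where the paper argues directly ($x \in \boldsymbol{V}(F_1 \rtimes F_2)$ and $x \notin \boldsymbol{V}(F_1)$ imply $x \in \boldsymbol{V}(F_2)$). Your observation that a non-$(-\infty)$ element of $\overline{\boldsymbol{T}(X_1, \ldots, X_n)}$ is real-valued at every point of $\boldsymbol{R}^n$ cleanly absorbs the degenerate cases that the paper instead treats by a separate preliminary reduction via Lemmas \ref{lemma2} and \ref{lemma3} (the case of a pair $(f, -\infty) \in F_i$ with $f \neq -\infty$, which forces $\boldsymbol{V}(F_i) = \varnothing$).
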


\begin{proof}
$(1)$ It is clear.

$(2)$ Since $E \supset E_{\lambda}$ for any $\lambda \in \Lambda$, we have $\boldsymbol{V}(E) \subset \bigcap_{\lambda \in \Lambda} \boldsymbol{V}(E_{\lambda})$.

Conversely, for any $x \in \bigcap_{\lambda \in \Lambda} \boldsymbol{V}(E_{\lambda})$, $\lambda \in \Lambda$ and $(f, g) \in E_{\lambda}$, we have $f(x) = g(x)$.
Hence, for any $(h_1, h_2) \in E$, we have $h_1(x) = h_2(x)$, and then $x \in \boldsymbol{V}(E)$.

$(3)$ We shall show that $\boldsymbol{V}(F_1 \rtimes F_2) \supset \boldsymbol{V}(F_1) \cup \boldsymbol{V}(F_2)$.
Let $x \in \boldsymbol{V}(F_1)$.
For any $(f_1, f_2) \in F_1$ and $(g_1, g_2) \in F_2$, since $f_1(x) = f_2(x)$, we have $f_1(x) \odot g_1(x) \oplus f_2(x) \odot g_2(x) = f_1(x) \odot g_2(x) \oplus f_2(x) \odot g_1(x)$.
Thus $x \in \boldsymbol{V}(F_1 \rtimes F_2)$.
Similarly, if $x \in \boldsymbol{V}(F_2)$, then $x \in \boldsymbol{V}(F_1 \rtimes F_2)$.

We shall show that $\boldsymbol{V}(F_1 \rtimes F_2) \subset \boldsymbol{V}(F_1) \cup \boldsymbol{V}(F_2)$.
Assume that there exists $f \in \overline{\boldsymbol{T}(X_1, \ldots, X_n)} \setminus \{ -\infty \}$ such that $(f, -\infty) \in F_1$.
By Lemmas \ref{lemma2}, \ref{lemma3}, we have $\boldsymbol{V}(F_1) = \varnothing$.
Let $x \in \boldsymbol{V}(F_1 \rtimes F_2)$.
For any $(g_1, g_2) \in F_2$, since $f(x) \odot g_1(x) \oplus (-\infty) \odot g_2(x) = f(x) \odot g_2(x) \oplus (-\infty) \odot g_1(x)$, $f(x) \in \boldsymbol{R}$ and  $g_1(x), g_2(x) \in \boldsymbol{T}$, we have $g_1(x) = g_2(x)$.
Therefore $x \in \boldsymbol{V}(F_2)$.
In the same way, we can show that if there exists $g \in \overline{\boldsymbol{T}(X_1, \ldots, X_n)} \setminus \{ -\infty \}$ such that $(g, -\infty) \in F_2$, then $\boldsymbol{V}(F_1 \rtimes F_2) \subset \boldsymbol{V}(F_1)$.
Assume that there are not $f, g \in \overline{\boldsymbol{T}(X_1, \ldots, X_n)} \setminus \{ -\infty \}$ such that $(f, -\infty) \in F_1$ and $(g, -\infty) \in F_2$.
Let $x \in \boldsymbol{V}(F_1 \rtimes F_2)$.
Assume that $x \not\in \boldsymbol{V}(F_1)$.
In this case, there exists $(f_1, f_2) \in F_1$ such that $f_1(x) \not= f_2(x)$.
By assumption, $f_1(x) \not= -\infty$ and $f_2(x) \not= -\infty$.
Without loss of generality, we can assume that $f_1(x) > f_2(x)$.
For any $(g_1, g_2) \in F_2$,
\begin{align*}
f_1(x) \odot g_1(x) \oplus f_2(x) \odot g_2(x) = f_1(x) \odot g_2(x) \oplus f_2(x) \odot g_1(x).
\end{align*}
If the left-hand side is $f_1(x) \odot g_1(x)$ and the right-hand side is $f_1(x) \odot g_2(x)$, then $g_1(x) = g_2(x)$ since $f_1(x) \not= -\infty$.
If the left-hand side is $f_1(x) \odot g_1(x)$ and the right-hand side is $f_2(x) \odot g_1(x)$, then $g_1(x) = -\infty$ since $f_1(x) > f_2(x) > -\infty$.
Thus $g_2(x) = -\infty$ in this case.
By the same argument, if the left-hand side is $f_2(x) \odot g_2(x)$, then $g_1(x) = g_2(x)$.
Hence $x \in \boldsymbol{V}(F_2)$.
\end{proof}

By Proposition \ref{prop0}, we can define the \textit{tropical Zariski topology} on $\boldsymbol{R}^n$ as in the classical way.

\begin{prop}
	\label{prop1}
The tropical Zariski topology coincides with the Euclidean topology on $\boldsymbol{R}^n$.
\end{prop}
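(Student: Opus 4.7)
The plan is to establish both inclusions between the two topologies. For the direction that every tropical Zariski closed set is Euclidean closed, I would write
\[\boldsymbol{V}(E) = \bigcap_{(f,g)\in E}\{x \in \boldsymbol{R}^n \,|\, f(x) = g(x)\};\]
since every element of $\overline{\boldsymbol{T}(X_1,\ldots,X_n)}$ evaluates on $\boldsymbol{R}^n$ to a continuous (piecewise affine) real-valued function — a difference of two max-affine functions — each equality locus $\{f = g\}$ is Euclidean closed, and arbitrary intersections of Euclidean closed sets are Euclidean closed.

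For the reverse inclusion, given an arbitrary Euclidean closed set $C\subset\boldsymbol{R}^n$, I would introduce the congruence
\[E_C := \{(f,g)\in \overline{\boldsymbol{T}(X_1,\ldots,X_n)}\times \overline{\boldsymbol{T}(X_1,\ldots,X_n)} \,|\, f(y) = g(y)\ \text{for all}\ y\in C\}.\]
It is routine that $E_C$ is indeed a congruence, and the inclusion $C\subset \boldsymbol{V}(E_C)$ is immediate from the definition. To establish $\boldsymbol{V}(E_C)\subset C$, it suffices to produce, for each $x\in\boldsymbol{R}^n\setminus C$, a pair $(f,g)\in E_C$ with $f(x)\ne g(x)$.

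The key tool is the observation that the sup-norm distance from $x$ is realized by an explicit tropical rational function, namely
\[s_x := \bigoplus_{i=1}^n \bigl(X_i\odot x_i^{\odot(-1)} \oplus x_i\odot X_i^{\odot(-1)}\bigr) \in \overline{\boldsymbol{T}(X_1,\ldots,X_n)},\]
whose evaluation at $y\in\boldsymbol{R}^n$ equals $\max_i |y_i - x_i|$. Since $C$ is Euclidean (equivalently, sup-norm) closed and $x\notin C$, there exists $r>0$ with $s_x(y)\ge r$ for every $y\in C$. I would then take $f := 0$ (the multiplicative identity) and $g := 0 \oplus (r\odot s_x^{\odot(-1)})$. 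On $C$ the expression $r\odot s_x^{\odot(-1)}$ evaluates to $r - s_x \le 0$, so $g = 0 = f$ on $C$, giving $(f,g)\in E_C$; at $x$, however, $s_x(x)=0$ forces $g(x) = \max(0, r) = r > 0 = f(x)$, as required.

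The main obstacle I anticipate is the recognition that the sup-norm metric can be encoded by a single element of $\overline{\boldsymbol{T}(X_1,\ldots,X_n)}$; once $s_x$ is identified, the separating pair is constructed in a single line and the argument concludes, since sup-norm open balls form a basis for the Euclidean topology. Everything hinges on the richness of the rational-function semifield — incorporating $X_i^{\odot(-1)}$ into the picture is precisely what allows polyhedral ``absolute value'' expressions that the purely polynomial setting would lack.
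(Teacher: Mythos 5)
Your proof is correct, but it takes a genuinely different route from the paper's. The paper disposes of this proposition by citation: it invokes \cite[Lemma 3.7.4]{Giansiracusa=Giansiracusa}, which identifies the tropical Zariski topology coming from congruences on the tropical \emph{polynomial} semiring with the Euclidean topology, and then transfers the statement to $\overline{\boldsymbol{T}(X_1, \ldots, X_n)}$ through the natural surjection $\boldsymbol{T}[X_1, \ldots, X_n] \twoheadrightarrow \overline{\boldsymbol{T}[X_1, \ldots, X_n]}$ and inclusion $\overline{\boldsymbol{T}[X_1, \ldots, X_n]} \hookrightarrow \overline{\boldsymbol{T}(X_1, \ldots, X_n)}$. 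You instead give a direct, self-contained argument: one inclusion from continuity of evaluation (each equality locus $\{f = g\}$ is Euclidean closed, and $\boldsymbol{V}(E)$ is an intersection of such loci), and the other by exhibiting, for each Euclidean closed $C$ and each $x \notin C$, a separating pair in $\boldsymbol{E}(C)$ built from the sup-norm distance function $s_x$. What your approach buys is independence from the external lemma and a cleaner exploitation of the semifield structure --- the availability of $X_i^{\odot(-1)}$ makes the hard direction essentially one line, whereas in the purely polynomial setting it requires real work; your gadget $s_x$ is moreover exactly the expression the paper itself deploys later in the proofs of Proposition \ref{prop2}(5) and Theorem \ref{thm1}(4), (5), so your argument fits the paper's toolkit. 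What the paper's approach buys is brevity and an explicit link to the existing literature, at the cost of leaving implicit the (routine but nontrivial) check that congruence varieties for the polynomial semiring and for the rational function semifield give the same collection of closed sets.
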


\begin{proof}
By \cite[Lemma 3.7.4]{Giansiracusa=Giansiracusa}, the tropical Zariski topology defined by congruences on tropical polynomial semirings coincides with the Euclidean topology.
With this fact, the natural surjective $\boldsymbol{T}$-algebra homomorphism $\boldsymbol{T}[X_1, \ldots, X_n] \twoheadrightarrow \overline{\boldsymbol{T}[X_1, \ldots, X_n]}$ and the natural injective $\boldsymbol{T}$-algebra homomorphism $\overline{\boldsymbol{T}[X_1, \ldots, X_n]} \hookrightarrow \overline{\boldsymbol{T}(X_1, \ldots, X_n)}$, we can easily have the conclusion.
\end{proof}

Let $\Gamma$ be a tropical curve and let $f_1, \ldots, f_n \in \operatorname{Rat}(\Gamma) \setminus \{ -\infty \}$.
Note that, by \cite[Theorem 1.1]{JuAe3}, we can choose these $f_1, \ldots, f_n$ to generate $\operatorname{Rat}(\Gamma)$ as a semifield over $\boldsymbol{T}$, i.e., $\operatorname{Rat}(\Gamma) = \boldsymbol{T}(f_1, \ldots, f_n)$.

\begin{rem}
	\label{rem1}
\upshape{
When $\operatorname{Rat}(\Gamma) = \boldsymbol{T}(f_1, \ldots, f_n)$, for any $x \in \Gamma_{\infty}$, there exists a number $i$ such that $f_i(x) = \infty$ or $f_i(x) = -\infty$.
In fact, if it does not hold, then $f_j(x) \in \boldsymbol{R}$ for any $j$, and thus $f_1, \ldots, f_n$ cannot generate $\operatorname{Rat}(\Gamma)$.
}
\end{rem}

\begin{lemma}
	\label{lemma4}
The correspondence $X_i \mapsto f_i$ induces a $\boldsymbol{T}$-algeba homomorphism $\psi : \overline{\boldsymbol{T}(X_1, \ldots, X_n)} \to \operatorname{Rat}(\Gamma)$.
In particular, if $\operatorname{Rat}(\Gamma) = \boldsymbol{T}(f_1, \ldots, f_n)$, then $\psi$ is surjective.
\end{lemma}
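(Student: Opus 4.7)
The plan is to build $\psi$ in three stages: first by the universal property of the tropical polynomial semiring, then by descent along the quotient defining $\overline{\boldsymbol{T}[X_1, \ldots, X_n]}$, and finally by the universal property of the fraction semifield applied to the inclusion $\overline{\boldsymbol{T}[X_1, \ldots, X_n]} \hookrightarrow \overline{\boldsymbol{T}(X_1, \ldots, X_n)}$.

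The starting map $\psi_0 : \boldsymbol{T}[X_1, \ldots, X_n] \to \operatorname{Rat}(\Gamma)$ is the substitution $p \mapsto p(f_1, \ldots, f_n)$, where the right-hand side is obtained by applying the operations $\oplus$ and $\odot$ of $\operatorname{Rat}(\Gamma)$ (reviewed in Subsection \ref{subsection2.4}) to the $f_i$ and the constants appearing in $p$. This is a $\boldsymbol{T}$-algebra homomorphism by construction. The key step is to show that $\psi_0$ descends through $\boldsymbol{E}(\boldsymbol{T}^n)_0$. Given $(p, q) \in \boldsymbol{E}(\boldsymbol{T}^n)_0$ and any finite point $y \in \Gamma$, the tuple $(f_1(y), \ldots, f_n(y))$ lies in $\boldsymbol{R}^n \subset \boldsymbol{T}^n$ since each $f_i$ takes only real values on $\Gamma \setminus \Gamma_{\infty}$; hence $\psi_0(p)(y) = \psi_0(q)(y)$. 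The two sides define continuous functions on the whole of $\Gamma$ that agree on the dense open subset $\Gamma \setminus \Gamma_{\infty}$, so they coincide everywhere, yielding an induced $\boldsymbol{T}$-algebra homomorphism $\overline{\psi_0} : \overline{\boldsymbol{T}[X_1, \ldots, X_n]} \to \operatorname{Rat}(\Gamma)$.

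To extend $\overline{\psi_0}$ to the fraction semifield it suffices to check that $\overline{\psi_0}$ sends every class different from $-\infty$ to an element of $\operatorname{Rat}(\Gamma)$ different from $-\infty$, so that its $\odot$-inverse is available. If the class of $p$ is not $-\infty$, then $p$ is not identically $-\infty$ on $\boldsymbol{T}^n$; being a tropical polynomial, it is then a maximum of affine-linear expressions with real coefficients and thus takes real values at every point of $\boldsymbol{R}^n$. Consequently $\overline{\psi_0}(p)(y) \in \boldsymbol{R}$ for every finite $y \in \Gamma$, so $\overline{\psi_0}(p) \neq -\infty$ in $\operatorname{Rat}(\Gamma)$. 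The universal property of the fraction semifield then produces the desired $\boldsymbol{T}$-algebra homomorphism $\psi : \overline{\boldsymbol{T}(X_1, \ldots, X_n)} \to \operatorname{Rat}(\Gamma)$ with $\psi(X_i) = f_i$.

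For the surjectivity clause, when $\operatorname{Rat}(\Gamma) = \boldsymbol{T}(f_1, \ldots, f_n)$ every element of $\operatorname{Rat}(\Gamma)$ is built from $f_1, \ldots, f_n$ and scalars in $\boldsymbol{T}$ by finitely many applications of $\oplus$, $\odot$, and $\odot$-inversion, and any such expression has a formally identical lift to $\overline{\boldsymbol{T}(X_1, \ldots, X_n)}$ whose image under $\psi$ is the given element. The only real subtlety in the whole argument is the descent step: everything there rests on the fact that rational functions on $\Gamma$ take only real values at finite points, with continuity handling what happens over $\Gamma_{\infty}$.
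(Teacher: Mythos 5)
Your proof is correct and follows essentially the same three-stage construction as the paper: substitution on $\boldsymbol{T}[X_1, \ldots, X_n]$, descent through $\boldsymbol{E}(\boldsymbol{T}^n)_0$ using that the $f_i$ take real values at finite points together with continuity at $\Gamma_{\infty}$, and extension to the fraction semifield. The only difference is that you explicitly verify that nonzero classes in $\overline{\boldsymbol{T}[X_1, \ldots, X_n]}$ map to elements of $\operatorname{Rat}(\Gamma)$ other than $-\infty$ before invoking the universal property of the fraction semifield, a hypothesis the paper's proof uses implicitly without checking; this is a welcome added detail rather than a different route.
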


\begin{proof}
Let $\psi_0 : \boldsymbol{T}[X_1, \ldots, X_n] \to \operatorname{Rat}(\Gamma)$ be the $\boldsymbol{T}$-algebra homomorphism induced by the correspondence $X_i \mapsto f_i$.
We shall show $\operatorname{Ker}(\psi_0) \supset \boldsymbol{E}(\boldsymbol{T}^n)_0$.
Let $(f, g) \in \boldsymbol{E}(\boldsymbol{T}^n)_0$.
For any $x \in \Gamma \setminus \Gamma_{\infty}$, since $f_i(x) \in \boldsymbol{R}$, we have
\begin{align*}
\psi_0(f)(x) =&~ f(f_1(x), \ldots, f_n(x))\\
=&~ g(f_1(x), \ldots, f_n(x))\\
=&~ \psi_0(g)(x).
\end{align*}
Hence for any $x \in \Gamma$, we have $\psi_0(f)(x) = \psi_0(g)(x)$.
Thus $\psi_0(f) = \psi_0(g)$, i.e., $(f, g) \in \operatorname{Ker}(\psi_0)$.
This $\psi_0$ induces the $\boldsymbol{T}$-algebra homomorphism $\psi_1 : \overline{\boldsymbol{T}[X_1, \ldots, X_n]} \to \operatorname{Rat}(\Gamma)$ satisfying $\pi \circ \psi_1 = \psi_0$, where $\pi$ stands for the natural surjective $\boldsymbol{T}$-algebra homomorphism $\boldsymbol{T}[X_1, \ldots, X_n] \twoheadrightarrow \overline{\boldsymbol{T}[X_1, \ldots, X_n]}$.
As $\overline{\boldsymbol{T}[X_1, \ldots, X_n]} \hookrightarrow \overline{\boldsymbol{T}(X_1, \ldots, X_n)}$ naturally, this $\psi_1$ is extended to a $\boldsymbol{T}$-algebra homomorphism $\overline{\boldsymbol{T}(X_1, \ldots, X_n)} \to \operatorname{Rat}(\Gamma)$.
It is $\psi$ we wanted.
When $\{ f_1, \ldots, f_n \}$ is a generating set of $\operatorname{Rat}(\Gamma)$, this $\psi$ is surjective.
\end{proof}

For $V \subset \boldsymbol{R}^n$, we define
\begin{align*}
\boldsymbol{E}(V) := \{ (f, g) \in \overline{\boldsymbol{T}(X_1, \ldots, X_n)} \times \overline{\boldsymbol{T}(X_1, \ldots, X_n)} \,|\, \forall x \in V, f(x) = g(x) \}.
\end{align*}
Then $\boldsymbol{E}(V)$ is a congruence on $\overline{\boldsymbol{T}(X_1, \ldots, X_n)}$.

Let $\theta : \Gamma \setminus \Gamma_{\infty} \to \boldsymbol{R}^n; x \mapsto (f_1(x), \ldots, f_n(x))$ and let $V := \boldsymbol{V}(\operatorname{Ker}(\psi))$.
Note that the image $\operatorname{Im}(\theta)$ of $\theta$ has the metric defined by the lattice length since rational functions on tropical curves have only integers as their slopes.

\begin{prop}
	\label{prop2}
In the above setting, the following hold:

$(1)$ for any $f \in \overline{\boldsymbol{T}(X_1, \ldots, X_n)}$ and $x \in \Gamma \setminus \Gamma_{\infty}$, $\psi(f)(x) = f (\theta(x))$ holds,

$(2)$ $\operatorname{Im}(\theta) \subset V$,

$(3)$ $\theta$ is continuous,

$(4)$ $\operatorname{Ker}(\psi) = \boldsymbol{E}(V)$, and

$(5)$ if $\psi$ is surjective, i.e., $\operatorname{Rat}(\Gamma) = \boldsymbol{T}(f_1, \ldots, f_n)$, then $\operatorname{Im}(\theta) \supset V$ and $\theta$ is an injective local isometry for the lattice length.
\end{prop}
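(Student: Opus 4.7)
For part (1) I would proceed by structural induction on the tropical rational expression $f$: the identity $\psi(f)(x) = f(\theta(x))$ is immediate for $f = X_i$ and for constants $f = t \in \boldsymbol{T}$, and it propagates through $\oplus$, $\odot$, and $\odot$-inverse (the last step is safe because $\theta(x) \in \boldsymbol{R}^n$ forces each intermediate value to be a real number, never $-\infty$). Part (2) is then immediate: for $x \in \Gamma \setminus \Gamma_\infty$ and $(f,g) \in \operatorname{Ker}(\psi)$, $f(\theta(x)) = \psi(f)(x) = \psi(g)(x) = g(\theta(x))$. Part (3) reduces to continuity of each coordinate $f_i$ as a map $\Gamma \setminus \Gamma_\infty \to \boldsymbol{R}$. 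For part (4), the inclusion $\operatorname{Ker}(\psi) \subset \boldsymbol{E}(V)$ is immediate from the definition of $V$, and the reverse follows from (1) and (2): if $f = g$ on $V$, then in particular $f = g$ on $\operatorname{Im}(\theta)$, so $\psi(f) = \psi(g)$ on $\Gamma \setminus \Gamma_\infty$, hence on $\Gamma$ by continuity (the set $\Gamma_\infty$ being finite).

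For part (5), I would first show that $\operatorname{Im}(\theta)$ is closed in $\boldsymbol{R}^n$. Given $\theta(x_k) \to z \in \boldsymbol{R}^n$, compactness of $\Gamma$ yields a subsequential limit $x \in \Gamma$, and Remark \ref{rem1} (which is where the generating hypothesis $\operatorname{Rat}(\Gamma) = \boldsymbol{T}(f_1, \ldots, f_n)$ gets used) rules out $x \in \Gamma_\infty$, so $z = \theta(x) \in \operatorname{Im}(\theta)$. Next, assuming $y \in V \setminus \operatorname{Im}(\theta)$, I pick $r > 0$ with $B(y, r) \cap \operatorname{Im}(\theta) = \varnothing$ and consider the tropical rational function $h := \bigoplus_{i=1}^{n} \bigl(X_i \odot (-y_i) \oplus y_i \odot X_i^{\odot(-1)}\bigr)$, which is the tropical analogue of $z \mapsto \max_i |z_i - y_i|$. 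Since $h(z) \ge r$ for every $z \in \operatorname{Im}(\theta)$, part (1) and continuity give $\psi(h \oplus r) = \psi(h)$, so $(h, h \oplus r) \in \operatorname{Ker}(\psi) = \boldsymbol{E}(V)$ by (4); but $h(y) = 0 \neq r = (h \oplus r)(y)$, contradicting $y \in V$. This yields $V \subset \operatorname{Im}(\theta)$, and combined with (2), $V = \operatorname{Im}(\theta)$.

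For injectivity and the local isometry, the common tool is to pull back the chip firing moves $\operatorname{CF}(\{x\}, l)$ through the surjective $\psi$. If $\theta(x_1) = \theta(x_2)$ with $x_1 \neq x_2$, take $h$ with $\psi(h) = \operatorname{CF}(\{x_1\}, l)$; applying part (1) yields $0 = h(\theta(x_1)) = h(\theta(x_2)) = -\min\{\operatorname{dist}(x_1, x_2), l\} < 0$, a contradiction. For the local isometry at $x \in \Gamma \setminus \Gamma_\infty$, along any outgoing arc $\theta$ is affine with integer slope vector $\boldsymbol{a} = k \boldsymbol{b}$ ($\boldsymbol{b}$ primitive, $k \in \boldsymbol{Z}_{>0}$, nonvanishing by injectivity). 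Taking $h$ with $\psi(h) = \operatorname{CF}(\{x\}, l)$, the arc-length slope of $h \circ \theta$ equals $-1$ on one hand and $k (\boldsymbol{c} \cdot \boldsymbol{b})$ on the other, where $\boldsymbol{c} \in \boldsymbol{Z}^n$ is the integer slope vector of the piecewise-affine function $h$ on the small segment $\theta(x) + s\boldsymbol{b}$; hence $k \mid 1$, so $k = 1$, and an arc of length $t$ maps to a segment of lattice length exactly $t$. The main obstacle is the separation step in the surjectivity argument: one must both produce an explicit tropical rational function capturing Euclidean distance to $y$ and arrange that it stays $\ge r$ on $\operatorname{Im}(\theta)$, which is exactly what the closedness of $\operatorname{Im}(\theta)$ derived from Remark \ref{rem1} enables.
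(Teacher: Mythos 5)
Your proposal is correct and follows essentially the same route as the paper: parts (1)--(4) by the same direct arguments, part (5) via closedness of $\operatorname{Im}(\theta)$ from compactness plus Remark \ref{rem1}, an explicit separating tropical rational function whose membership in $\operatorname{Ker}(\psi) = \boldsymbol{E}(V)$ yields a contradiction (the paper uses the truncation $\bigl(\bigoplus_i (a_i^{\odot(-1)} \odot X_i \oplus (a_i^{\odot(-1)} \odot X_i)^{\odot(-1)})\bigr)^{\odot(-1)} \oplus (-\varepsilon)$, the mirror image of your $(h, h \oplus r)$ pair), and pullbacks of chip firing moves for injectivity and the local isometry. The only nit is that $B(y,r) \cap \operatorname{Im}(\theta) = \varnothing$ in the Euclidean norm gives $h \ge r/\sqrt{n}$ rather than $h \ge r$ on $\operatorname{Im}(\theta)$ (the paper works with an $(\varepsilon\sqrt{n})$-neighborhood for exactly this reason), a trivial rescaling.
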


\begin{proof}
$(1)$ Since $\psi$ is a $\boldsymbol{T}$-algebra homomorphism, by the definition of $\theta$, it is clear.

$(2)$ For any $(f, g) \in \operatorname{Ker}(\psi)$ and $x \in \Gamma \setminus \Gamma_{\infty}$, as $\psi(f) = \psi(g)$, by $(1)$, we have
\begin{align*}
f(\theta(x)) = \psi(f)(x) = \psi(g)(x) = g(\theta(x)).
\end{align*}
Thus $\theta(x) \in V$.

$(3)$ By Proposition \ref{prop1}, $f_i$ is continuous, and so is $\theta$.

$(4)$ Since $V = \boldsymbol{V}(\operatorname{Ker}(\psi))$, it is clear that $\operatorname{Ker}(\psi) \subset \boldsymbol{E}(V)$.
For any $(f, g) \in \boldsymbol{E}(V)$ and $x \in V$, the equality $f(x) = g(x)$ holds.
Hence, by $(2)$, for any $y \in \Gamma \setminus \Gamma_{\infty}$, we have $\psi(f)(y) = f(\theta(y)) = g(\theta(y)) = \psi(g)(y)$.
Thus $\psi(f)$ is eaqual to $\psi(g)$ as a rational function on $\Gamma$.
This means that $(f, g) \in \operatorname{Ker}(\psi)$.

$(5)$ We shall show that $\theta$ is injective.
For $x, y \in \Gamma \setminus \Gamma_{\infty}$, we assume $\theta(x) = \theta(y)$.
Since $\psi$ is surjective, there exists $f \in \overline{\boldsymbol{T}(X_1, \ldots, X_n)}$ such that $\psi(f) = \operatorname{CF}(\{ x \}, \infty)$.
Since
\begin{align*}
0 =&~ \operatorname{CF}(\{ x \}, \infty)(x) = \psi(f)(x) = f(\theta(x))\\
=&~ f(\theta(y)) = \psi(f)(y) = \operatorname{CF}(\{ x \}, \infty)(y)
\end{align*}
by $(1)$, $x$ must coincide with $y$.
Hence $\theta$ is injective.

Assume that the difference set $V \setminus \operatorname{Im}(\theta)$ is not empty.
For any $x \in V \setminus \operatorname{Im}(\theta)$, there exists $\varepsilon > 0$ such that the $(\varepsilon\sqrt{n})$-neighborhood of $x$ does not intersect $\operatorname{Im}(\theta)$.
In fact, if we cannot find such $\varepsilon$, this $x$ must be a boundary point of $\operatorname{Im}(\theta)$ in $\boldsymbol{R}^n$.
Since a connected subgraph $\Gamma_1$ of $\Gamma$ that contains no points at infinity is compact, so is $\theta(\Gamma_1)$.
Hence for $x$, there exists $y \in \Gamma_{\infty}$ such that any sequence $\{ y_i \}$ such that $y_i \to y$ and $\theta(y_i) \to x$ as $i \to \infty$.
By Remark \ref{rem1} and since $x \in \boldsymbol{R}^n$, it does not occur.
This means that such $x$ does not exist.
Let $x =: (a_1, \ldots, a_n)$.
For 
\begin{align*}
f := \left( \bigoplus_{i=1}^n \left( a_i^{\odot(-1)} \odot X_i \oplus \left( a_i^{\odot(-1)} \odot X_i \right)^{\odot(-1)} \right) \right)^{\odot(-1)} \oplus (-\varepsilon),
\end{align*}
$\psi(f)$ is the constant function of $-\varepsilon$ on $\Gamma \setminus \Gamma_{\infty}$.
Thus $\psi(f) = -\varepsilon \in \operatorname{Rat}(\Gamma)$.
Since $\psi(-\varepsilon) = -\varepsilon$, by $(4)$, we have $(f, -\varepsilon) \in \boldsymbol{E}(V)$.
Therefore, for any $y \in V$, $f(y) = -\varepsilon$ holds, but it is a contradiction as $f(x) = 0 \not= -\varepsilon$.
Hence we have $\operatorname{Im}(\theta) \supset V$.

We shall show that $\theta$ is a local isometry.
Let $(G, l)$ be the model for $\Gamma$ such that $V(G)$ consists of the vertices of the underlying graph of canonical model for $\Gamma$ and the zeros and poles of $f_1, \ldots, f_n$.
For any edge $e$ of $G$, the greatest common divisor of the slopes of $f_1, \ldots, f_n$ on $e$ must be one.
In face, if it is zero or at least two, then there exist no rational expressions of $f_1, \ldots, f_n$ which have one as their slopes on any segment in $e$.
On the other hand, for such a segment, the chip firing move by a finite point on the segment and a sufficiently small positive number has slope one on the segment.
This contradicts that $f_1, \ldots, f_n$ generate $\operatorname{Rat}(\Gamma)$ as a semifield over $\boldsymbol{T}$.
Therefore $\theta$ is a local isometry.
\end{proof}

\begin{cor}
For any $n \ge 2$ and any tropical curve $\Gamma$, the tropical rational function semifield $\overline{\boldsymbol{T}(X_1, \ldots, X_n)}$ is not isomorphic to $\operatorname{Rat}(\Gamma)$ as a $\boldsymbol{T}$-algebra.
\end{cor}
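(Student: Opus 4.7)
The plan is to argue by contradiction, exploiting the dimension mismatch between a tropical curve (locally one-dimensional) and $\boldsymbol{R}^n$. Suppose for contradiction that there is a $\boldsymbol{T}$-algebra isomorphism $\varphi : \overline{\boldsymbol{T}(X_1, \ldots, X_n)} \to \operatorname{Rat}(\Gamma)$, and set $f_i := \varphi(X_i)$. Surjectivity of $\varphi$ together with the fact that $X_1, \ldots, X_n$ generate $\overline{\boldsymbol{T}(X_1, \ldots, X_n)}$ as a semifield over $\boldsymbol{T}$ forces $\operatorname{Rat}(\Gamma) = \boldsymbol{T}(f_1, \ldots, f_n)$, so Lemma \ref{lemma4} supplies a surjective $\boldsymbol{T}$-algebra homomorphism $\psi : \overline{\boldsymbol{T}(X_1, \ldots, X_n)} \twoheadrightarrow \operatorname{Rat}(\Gamma)$ with $\psi(X_i) = f_i$. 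Since any $\boldsymbol{T}$-algebra homomorphism out of $\overline{\boldsymbol{T}(X_1, \ldots, X_n)}$ is determined by its values on the generators $X_1, \ldots, X_n$, $\psi$ must coincide with $\varphi$; in particular $\psi$ is also injective, so $\operatorname{Ker}(\psi) = \Delta$.

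Next I would feed this into the machinery already established. By Proposition \ref{prop0}(1) the congruence variety is $V := \boldsymbol{V}(\operatorname{Ker}(\psi)) = \boldsymbol{V}(\Delta) = \boldsymbol{R}^n$, and then Proposition \ref{prop2}(5) yields $\operatorname{Im}(\theta) \supset V = \boldsymbol{R}^n$. Hence the map $\theta = (f_1, \ldots, f_n) : \Gamma \setminus \Gamma_{\infty} \to \boldsymbol{R}^n$ is surjective.

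To finish, I would note that each $f_i$ is piecewise affine with integer slopes on some common finite model of $\Gamma$. Restricted to each of the finitely many edges of that model, $\theta$ is therefore piecewise linear with finitely many pieces, so $\operatorname{Im}(\theta)$ is contained in a finite union of (possibly unbounded) line segments in $\boldsymbol{R}^n$. For $n \ge 2$ such a union has Lebesgue measure zero — equivalently empty interior — and in particular cannot equal $\boldsymbol{R}^n$. This contradicts the surjectivity of $\theta$ derived above, completing the proof.

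The only non-routine point I anticipate is the identification $\psi = \varphi$ in the first step, which rests on the (easy but necessary) observation that $X_1, \ldots, X_n$ generate $\overline{\boldsymbol{T}(X_1, \ldots, X_n)}$ as a $\boldsymbol{T}$-algebra in the semifield sense, so that $\varphi$ factors through the surjection produced by Lemma \ref{lemma4}. The remaining dimension count is a one-liner and uses the hypothesis $n \ge 2$ in the essential way: for $n = 1$, a finite union of intervals and rays may well exhaust $\boldsymbol{R}$, so the dimension obstruction genuinely disappears in that case.
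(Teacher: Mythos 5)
Your proposal is correct and follows essentially the same route as the paper: both reduce to $\operatorname{Ker}(\psi)=\Delta$, apply Proposition \ref{prop2}(5) to conclude $\operatorname{Im}(\theta)=\boldsymbol{V}(\Delta)=\boldsymbol{R}^n$, and derive a contradiction for $n\ge 2$. The only (harmless) difference is in the last step: the paper concludes that $\Gamma\setminus\Gamma_\infty$ would be homeomorphic to $\boldsymbol{R}^n$ and declares this absurd, whereas you make the obstruction explicit by observing that the image of $\theta$ is a finite union of segments and so cannot cover $\boldsymbol{R}^n$ --- a slightly more self-contained version of the same dimension count.
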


\begin{proof}
If there exists a tropical curve $\Gamma$ such that $\operatorname{Rat}(\Gamma)$ is isomorphic to $\overline{\boldsymbol{T}(X_1, \ldots, X_n)}$ via $\psi$ as a $\boldsymbol{T}$-algebra, then, since $\operatorname{Ker}(\psi) = \Delta$ and $\boldsymbol{V}(\Delta) = \boldsymbol{R}^n$, by Proposition \ref{prop2}$(5)$, $\Gamma \setminus \Gamma_{\infty}$ is homeomorphic to $\boldsymbol{R}^n$.
It is a contradiction.
\end{proof}

By the following example, we know that the converse of Proposition \ref{prop2}$(5)$ does not hold:

\begin{ex}
	\label{ex1}
\upshape{
Let $\Gamma := [-\infty, \infty]$.
We define $f_1, f_2 \in \operatorname{Rat}(\Gamma)$ as $f_1(t) := 0$ and $f_2(t) := t$ if $t \le 0$; $f_1(t) := t$ and $f_2(t) := 0$ if $0 < t \le 1$; $f_1(t) := 1$ and $f_2(t) := -t$ if $t \ge 1$.
Clearly $f_1$ and $f_2$ do not generate $\operatorname{Rat}(\Gamma)$ as a semifield over $\boldsymbol{T}$.
By Lemma \ref{lemma4}, the correspondence $X_i \mapsto f_i$ induces a $\boldsymbol{T}$-algebra homomorphism $\psi : \overline{\boldsymbol{T}(X_1, X_2)} \to \operatorname{Rat}(\Gamma)$.
Let $\theta : \Gamma \setminus \Gamma_{\infty} \to \boldsymbol{R}^2; x \mapsto (f_1(x), f_2(x))$.
Then, since $\operatorname{Im}(\theta)$ is closed, we can prove that $\operatorname{Im}(\theta) \supset \boldsymbol{V}(\operatorname{Ker}(\psi))$ (and hence $\operatorname{Im}(\theta) = \boldsymbol{V}(\operatorname{Ker}(\psi))$) as in the proof of Proposition \ref{prop2}$(5)$.
By definition, $\theta$ is an injective local isometry for the lattice length.
}
\end{ex}

We can consider any congruence instead of $\operatorname{Ker}(\psi)$ in Proposition \ref{prop2}.
The following theorem is our main theorem:

\begin{thm}
	\label{thm1}
Let $E$ be a congruence on $\overline{\boldsymbol{T}(X_1, \ldots, X_n)}$, $F$ a congruence on $\overline{\boldsymbol{T}(Y_1, \ldots, Y_m)}$, $V := \boldsymbol{V}(E)$ and $W := \boldsymbol{V}(F)$.
Let $\pi_E : \overline{\boldsymbol{T}(X_1, \ldots, X_n)} \twoheadrightarrow \overline{\boldsymbol{T}(X_1, \ldots, X_n)}  / E$ be the natural surjective $\boldsymbol{T}$-algebra homomorphism.
Let $\psi : \overline{\boldsymbol{T}(X_1, \ldots, X_n)} / E \to \overline{\boldsymbol{T}(Y_1, \ldots, Y_m)} / F$ be a $\boldsymbol{T}$-algebra homomorphism and $\theta : W \to \boldsymbol{T}^n; y \mapsto (\psi(\pi_E(X_1))(y), \ldots, \psi(\pi_E(X_n))(y))$.
Then the following hold:

$(1)$ for any $f \in \overline{\boldsymbol{T}(X_1, \ldots, X_n)}$ and $y \in W$, $\psi(\pi_E(f))(y) = \pi_E(f)(\theta(y)) = f(\theta(y))$ holds,

$(2)$ $\operatorname{Im}(\theta) \subset V$,

$(3)$ $\theta$ is continuous,

$(4)$ if $\psi$ is surjective, then $\theta$ is a closed embedding,

$(5)$ if $\psi$ is injective, $F = \boldsymbol{E}(W)$ and $\operatorname{Im}(\theta)$ is closed, then $\operatorname{Im}(\theta) \supset V$, and

$(6)$ if $\psi$ is an isomorphism and $F = \boldsymbol{E}(W)$, then $V$ and $W$ are homeomorphic via $\theta$.
\end{thm}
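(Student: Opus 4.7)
My plan is to dispatch (1)--(3) as formal consequences of the definitions, establish (4) by using surjectivity of $\psi$ to build a continuous inverse of $\theta$ on its image, prove (5) by adapting the tropical bump-function construction of Proposition \ref{prop2}(5), and deduce (6) by combining (4) and (5).

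For (1)--(3): the composition $\psi \circ \pi_E$ is a $\boldsymbol{T}$-algebra homomorphism sending $X_i \mapsto \psi(\pi_E(X_i))$, and evaluation at $y \in W$ is a well-defined $\boldsymbol{T}$-algebra homomorphism on $\overline{\boldsymbol{T}(Y_1, \ldots, Y_m)}/F$ (well-defined because $F$ forces equality on $W$); composing these yields $\psi(\pi_E(f))(y) = f(\theta(y))$, and once $\theta(y) \in V$ is known, $\pi_E(f)(\theta(y)) = f(\theta(y))$ is simply well-definedness of evaluation of $\overline{\boldsymbol{T}(X_1, \ldots, X_n)}/E$ on $V$. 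This gives (1). For (2), any $(f,g) \in E$ yields $\psi(\pi_E(f)) = \psi(\pi_E(g))$, so by (1) $f(\theta(y)) = g(\theta(y))$ for every $y \in W$, whence $\theta(y) \in V$. For (3), each coordinate $\psi(\pi_E(X_i))$ is represented by a tropical rational function, which is continuous by Proposition \ref{prop1}.

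For (4), surjectivity of $\psi$ provides $g_j \in \overline{\boldsymbol{T}(X_1, \ldots, X_n)}$ with $\psi(\pi_E(g_j)) = \pi_F(Y_j)$. By (1) applied to $g_j$ we obtain $y_j = \pi_F(Y_j)(y) = g_j(\theta(y))$ for $y \in W$, so the map $x \mapsto (g_1(x), \ldots, g_m(x))$ is a continuous left inverse of $\theta$ on $\operatorname{Im}(\theta)$, yielding both injectivity of $\theta$ and continuity of $\theta^{-1}$. For closedness of $\operatorname{Im}(\theta)$, take $\theta(y_n) \to x$; then $y_n = (g_1(\theta(y_n)), \ldots, g_m(\theta(y_n))) \to y := (g_1(x), \ldots, g_m(x))$, and since $W$ is closed in $\boldsymbol{R}^m$ (each defining equation of $F$ is a closed condition by continuity) we get $y \in W$, and continuity of $\theta$ gives $\theta(y) = x \in \operatorname{Im}(\theta)$.

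The hardest part is (5): suppose for contradiction that $x = (a_1, \ldots, a_n) \in V \setminus \operatorname{Im}(\theta)$; closedness of $\operatorname{Im}(\theta)$ provides $\varepsilon > 0$ with $B_{\varepsilon\sqrt{n}}(x) \cap \operatorname{Im}(\theta) = \varnothing$, so every $z \in \operatorname{Im}(\theta)$ has some coordinate with $|z_i - a_i| \geq \varepsilon$. The same tropical bump function as in Proposition \ref{prop2}(5),
\[
f := \left(\bigoplus_{i=1}^{n} \bigl(a_i^{\odot(-1)} \odot X_i \oplus (a_i^{\odot(-1)} \odot X_i)^{\odot(-1)}\bigr)\right)^{\odot(-1)} \oplus (-\varepsilon),
\]
then satisfies $f \equiv -\varepsilon$ on $\operatorname{Im}(\theta)$ while $f(x) = 0$. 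By (1), $\psi(\pi_E(f))$ and $\psi(\pi_E(-\varepsilon))$ agree pointwise on $W$; since $F = \boldsymbol{E}(W)$, they represent the same class in the quotient, and injectivity of $\psi$ forces $(f, -\varepsilon) \in E$, contradicting $f(x) = 0 \neq -\varepsilon$ at $x \in V$. Finally, for (6): $\psi$ surjective gives by (4) that $\theta$ is a closed embedding, so in particular $\operatorname{Im}(\theta)$ is closed; $\psi$ injective gives by (5) that $V \subset \operatorname{Im}(\theta)$; combined with (2) we obtain $\operatorname{Im}(\theta) = V$, and a bijective closed embedding $W \to V$ is a homeomorphism.
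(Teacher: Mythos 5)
Your proof is correct, and for parts (1), (2), (3), (5) and (6) it follows essentially the same route as the paper: (1)--(3) are formal, (5) uses the identical tropical bump function $f$ with the identical chain ``constant on $W$'' $\Rightarrow$ ``equal in the quotient by $F = \boldsymbol{E}(W)$'' $\Rightarrow$ ``equal in $\overline{\boldsymbol{T}(X_1,\ldots,X_n)}/E$ by injectivity'' $\Rightarrow$ contradiction at $x \in V$, and (6) is the same combination of the earlier parts. The genuine difference is in (4). The paper proves injectivity by choosing, for each pair $x, y \in W$ with $\theta(x)=\theta(y)$, a preimage under $\psi\circ\pi_E$ of a bump function peaking at $x$, and proves closedness by a dichotomy on closed subsets $W_1 \subset W$ (compact, hence compact image; or unbounded, in which case some $\pi_F(Y_j)$ diverges along a sequence and hence some coordinate $\psi(\pi_E(X_l))$ must diverge). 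You instead pull back all the coordinate functions at once: surjectivity gives $g_j$ with $\psi(\pi_E(g_j))=\pi_F(Y_j)$, and $(g_1,\ldots,g_m)$ is a globally defined continuous left inverse of $\theta$ on $\operatorname{Im}(\theta)$. This buys injectivity, continuity of $\theta^{-1}$, and closedness of the image in one stroke (using that $W=\boldsymbol{V}(F)$ is closed in $\boldsymbol{R}^m$), and is arguably cleaner than the paper's two separate arguments; the only points worth making explicit are the degenerate case $W=\varnothing$ (where everything is trivial) and the observation that no $g_j$ can be $-\infty$ when $W\neq\varnothing$ (otherwise $(Y_j,-\infty)\in F$ would force $W=\varnothing$), so that each $g_j$ is indeed finite and continuous on $\operatorname{Im}(\theta)\subset\boldsymbol{R}^n$.
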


\begin{proof}
Let $\pi_F : \overline{\boldsymbol{T}(Y_1, \ldots, Y_m)} \twoheadrightarrow \overline{\boldsymbol{T}(Y_1, \ldots, Y_m)} / F$ be the natural surjective $\boldsymbol{T}$-algebra homomorphism.

$(1)$ Since $\psi$ is a $\boldsymbol{T}$-algebra homomorphism, by the definition of $\theta$, it is clear.

$(2)$ Let $(f, g) \in E$.
By $(1)$, for any $y \in W$, we have
\begin{align*}
f(\theta(y)) = \psi(\pi_E(f))(y) = \psi(\pi_E(g))(y) = g(\theta(y)).
\end{align*}
Thus $\theta(y) \in V$.

$(3)$ By $(2)$, $\operatorname{Im}(\theta) \subset V \subset \boldsymbol{R}^n$.
Hence, by Proposition \ref{prop1} and the definition of $\theta$, it is clear.

$(4)$ If $W$ is empty, then so is $\operatorname{Im}(\theta)$, and hence $\theta$ is a closed embedding.

Assume that $W$ is nonempty and $\psi$ is surjective.

We shall show that $\theta$ is injective.
For $x, y \in W$, we assume $\theta(x) = \theta(y)$ and let $x =: (a_1, \ldots, a_n)$.
Note that $a_i = \pi_F(Y_i)(x)$.
Since $\pi_E$ and $\psi$ are surjective, there exists $f \in \overline{\boldsymbol{T}(X_1, \ldots, X_n)}$ such that 
\begin{align*}
\psi(\pi_E(f)) = \left( \bigoplus_{i=1}^m \left( a_i^{\odot (-1)} \odot \pi_F(Y_i) \oplus \left( a_i^{\odot (-1)} \odot \pi_F(Y_i) \right)^{\odot (-1)} \right) \right)^{\odot (-1)}.
\end{align*}
Since
\begin{align*}
0 = \psi(\pi_E(f))(x) = f(\theta(x)) = f(\theta(y)) = \psi(\pi_E(f))(y)
\end{align*}
by $(1)$, $x$ must coincide with $y$.
Hence $\theta$ is injective.

We shall show that $\theta$ is a closed map.
Since $\varnothing \not= \operatorname{Im}(\theta) \subset V$ by $(2)$, $V$ is not empty.
Let $W_1$ be a closed subset of $W$.
If $W_1$ is compact, then so is $\theta(W_1)$ in $V \subset \boldsymbol{R}^n$.
Hence $\theta(W_1)$ is a closed subset of $V$ in this case.
If $W_1$ is not compact, then $W_1$ is unbounded.
Thus there exists a sequence $\{ y_k \}$ in $W_1$ such that $\pi_F(Y_j)(y_k) \to \infty$ or $-\infty$ as $k \to \infty$ for some $j$.
Since $\pi_E$ and $\psi$ are surjective, there exists $f \in \overline{\boldsymbol{T}(X_1, \ldots, X_n)}$ such that $\psi(\pi_E(f)) = \pi_F(Y_j)$.
For any such sequence $\{ y_k \}$, since $\pi_F(Y_j)(y_k) = \psi(\pi_E(f))(y_k) \to \infty$ or $-\infty$ as $k \to \infty$, there exists a number $l$ such that $\psi(\pi_E(X_l))(y_k) \to \infty$ or $-\infty$ as $k \to \infty$.
In fact, if for any $l^{\prime}$, $\psi(\pi_E(X_{l^{\prime}}))(y_k) \to b_{l^{\prime}} \in \boldsymbol{R}$ as $k \to \infty$, then 
\begin{align*}
\psi(\pi_E(f))(y_k) =&~ f(\theta(y_k)) \\
=&~ f(\psi(\pi_E(X_1))(y_k), \ldots, \psi(\pi_E(X_n))(y_k))\\
\to&~ b \in \boldsymbol{R}
\end{align*}
as $k \to \infty$ by $(1)$.
This is a contradiction.
Hence $\theta(W_1)$ is a closed set.

$(5)$ If $W$ is empty, then $F = \boldsymbol{E}(W) = \overline{\boldsymbol{T}(Y_1, \ldots, Y_m)} \times \overline{\boldsymbol{T}(Y_1, \ldots, Y_m)}$.
Hence $\overline{\boldsymbol{T}(Y_1, \ldots, Y_m)} / F = \{ \pi_F(-\infty) \}$.
Since $\psi$ is injective, $\overline{\boldsymbol{T}(X_1, \ldots, X_n)} / E = \{ \pi_E(-\infty) \}$.
By Lemma \ref{lemma3}, $V = \boldsymbol{V}(E)$ is also empty.
Hence we have the conclusion.

Assume that $W$ is not empty.
By $(2)$, we have $\varnothing \not= \operatorname{Im}(\theta) \subset V$.
By Lemma \ref{lemma3}, both $\overline{\boldsymbol{T}(X_1, \ldots, X_n)} / E$ and $\overline{\boldsymbol{T}(Y_1, \ldots, Y_m)} / F$ are semifields over $\boldsymbol{T}$.
Assume that the difference set $V \setminus \operatorname{Im}(\theta)$ is not empty.
Since $\operatorname{Im}(\theta)$ is closed, for any $x \in V \setminus \operatorname{Im}(\theta)$, there exists $\varepsilon > 0$ such that the $(\varepsilon \sqrt{n})$-neighborhood of $x$ does not intersect $\operatorname{Im}(\theta)$.
Let $x =: (a_1, \ldots, a_n)$.
For 
\begin{align*}
f := \left( \bigoplus_{i=1}^n \left( a_i^{\odot(-1)} \odot X_i \oplus \left( a_i^{\odot(-1)} \odot X_i \right)^{\odot(-1)} \right) \right)^{\odot(-1)} \oplus (-\varepsilon),
\end{align*}
$\psi(\pi_E(f))$ is the constant function of $-\varepsilon$ on $W$ by $(1)$.
On the other hand, since both $\overline{\boldsymbol{T}(X_1, \ldots, X_n)} / E$ and $\overline{\boldsymbol{T}(Y_1, \ldots, Y_m)} / F$ are semifields over $\boldsymbol{T}$, we have $\psi(\pi_E(-\varepsilon)) = -\varepsilon$, and thus it is also the constant function of $-\varepsilon$ on $W$.
As $F = \boldsymbol{E}(W)$, we have $\psi(\pi_E(f)) = \psi(\pi_E(-\varepsilon))$.
Since $\psi$ is injective, we have $\pi_E(f) = \pi_E(-\varepsilon)$.
However, $\pi_E(f)(x) = 0 \not= -\varepsilon$, and so $\pi_E(f) \not= \pi_E(-\varepsilon)$.
This is a contradiction.
Hence we have $V \setminus \operatorname{Im}(\theta) = \varnothing$, i.e., $\operatorname{Im}(\theta) \supset V$.

$(6)$ It follows from $(2), \ldots, (5)$.
\end{proof}

Theorem \ref{thm1} have the following corollaries:

\begin{cor}
In the same setting in Theorem \ref{thm1}, if $\psi$ is injective and $F = \boldsymbol{E}(W)$, then $E = \boldsymbol{E}(V)$.
In particular, when $\overline{\boldsymbol{T}(X_1, \ldots, X_n)} / E$ is isomorphic to $\overline{\boldsymbol{T}(Y_1, \ldots, Y_m)} / F$ as a $\boldsymbol{T}$-algebra, $E = \boldsymbol{E}(\boldsymbol{V}(E))$ if and only if $F = \boldsymbol{E}(\boldsymbol{V}(F))$.
\end{cor}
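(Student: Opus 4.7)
The plan is to prove the first assertion directly and then derive the ``in particular'' statement by applying it to both $\psi$ and its inverse.

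For the first claim, note that by the very definition of $V = \boldsymbol{V}(E)$ we always have $E \subset \boldsymbol{E}(V)$, so the task reduces to showing the reverse inclusion $\boldsymbol{E}(V) \subset E$. Take $(f, g) \in \boldsymbol{E}(V)$, so that $f(x) = g(x)$ for every $x \in V$. I want to conclude $\pi_E(f) = \pi_E(g)$, and since $\psi$ is injective it suffices to verify $\psi(\pi_E(f)) = \psi(\pi_E(g))$ in $\overline{\boldsymbol{T}(Y_1,\ldots,Y_m)}/F$. By Theorem \ref{thm1}$(1)$, for every $y \in W$ we have
\begin{align*}
\psi(\pi_E(f))(y) = f(\theta(y)) \quad \text{and} \quad \psi(\pi_E(g))(y) = g(\theta(y)),
\end{align*}
and by Theorem \ref{thm1}$(2)$ we know $\theta(y) \in V$, so these two values coincide. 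Thus $\psi(\pi_E(f))$ and $\psi(\pi_E(g))$ agree pointwise on $W$ when viewed through any representatives; equivalently, choosing lifts $\tilde{h}_1, \tilde{h}_2 \in \overline{\boldsymbol{T}(Y_1,\ldots,Y_m)}$ of $\psi(\pi_E(f))$ and $\psi(\pi_E(g))$, we have $(\tilde{h}_1, \tilde{h}_2) \in \boldsymbol{E}(W) = F$, which is precisely the statement that $\psi(\pi_E(f)) = \psi(\pi_E(g))$.

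The main subtlety I anticipate is making sure that evaluation of classes in the quotient $\overline{\boldsymbol{T}(Y_1,\ldots,Y_m)}/F$ at points of $W$ is well defined and separates classes. Well-definedness is automatic from $F \subset \boldsymbol{E}(W)$ (which holds since $W = \boldsymbol{V}(F)$), and separation of classes is exactly the hypothesis $F = \boldsymbol{E}(W)$. I should state this observation as the one essential ingredient before invoking Theorem \ref{thm1}$(1),(2)$; everything else is formal.

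For the ``in particular'' part, assume $\psi$ is a $\boldsymbol{T}$-algebra isomorphism between the two quotients. If $F = \boldsymbol{E}(\boldsymbol{V}(F)) = \boldsymbol{E}(W)$, then $\psi$ is in particular injective and the first claim yields $E = \boldsymbol{E}(V) = \boldsymbol{E}(\boldsymbol{V}(E))$. Conversely, if $E = \boldsymbol{E}(\boldsymbol{V}(E))$, apply the same argument with the roles of $(E,V,\psi)$ and $(F,W,\psi^{-1})$ swapped: since $\psi^{-1}$ is also a $\boldsymbol{T}$-algebra isomorphism (hence injective), the first claim gives $F = \boldsymbol{E}(W) = \boldsymbol{E}(\boldsymbol{V}(F))$. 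This completes the equivalence.
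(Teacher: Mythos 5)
Your proof is correct and is essentially the paper's argument in contrapositive form: the paper assumes $(f,g)\in\boldsymbol{E}(V)\setminus E$ and derives a contradiction using exactly the same three ingredients you use (injectivity of $\psi$, the separation of classes by evaluation on $W$ coming from $F=\boldsymbol{E}(W)$, and Theorem \ref{thm1}$(1)$,$(2)$), whereas you run the implication directly. You also spell out the trivial inclusion $E\subset\boldsymbol{E}(V)$ and the ``in particular'' deduction via $\psi$ and $\psi^{-1}$, which the paper leaves implicit; no gap either way.
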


\begin{proof}
Assume that there exists $(f, g) \in \boldsymbol{E}(V) \setminus E$.
Since $\pi_E(f) \not= \pi_E(g)$ and $\psi$ is injective, we have $\psi(\pi_E(f)) \not= \psi(\pi_E(g))$.
Thus there exists $y \in W$ such that $\psi(\pi_E(f))(y) \not= \psi(\pi_E(g))(y)$.
By Theorem \ref{thm1}(1), we have $f(\theta(y)) = \psi(\pi_E(f))(y) \not= \psi(\pi_E(g))(y) = g(\theta(y))$, and this means $(f, g) \not\in \boldsymbol{E}(V)$ by Theorem \ref{thm1}(2), which is a contradiction.
\end{proof}

\begin{cor}[cf.~{\cite[Lemma 3.4]{JuAe5}}, {\cite[Lemma 3.7]{JuAe4}}]
	\label{cor1}
In the same setting in Theorem \ref{thm1}, for any $f \in \overline{\boldsymbol{T}(X_1, \ldots, X_n)} / E$, the following hold:

$(1)$ $\operatorname{sup}\{ f(x) \,|\, x \in V \} \ge \operatorname{sup}\{ \psi(f)(y) \,|\, y \in W \}$, and

$(2)$ $\operatorname{inf}\{ f(x) \,|\, x \in V \} \le \operatorname{inf}\{ \psi(f)(y) \,|\, y \in W \}$.

Moreover, if $W$ is nonempty, $\psi$ is injective and $F = \boldsymbol{E}(W)$, then the equalities hold in both $(1)$ and $(2)$.
\end{cor}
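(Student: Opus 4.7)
The plan is to derive both inequalities directly from parts (1) and (2) of Theorem \ref{thm1}, and then to obtain the equality case by using the characterization $F = \boldsymbol{E}(W)$ to move identities between $\psi(f)$ and suitable constants across the isomorphism provided by the injectivity of $\psi$.

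First, I would observe that for $f \in \overline{\boldsymbol{T}(X_1,\ldots,X_n)}/E$, the value $f(x)$ is well-defined for every $x \in V = \boldsymbol{V}(E)$ because $E \subset \boldsymbol{E}(V)$ by the definition of congruence varieties. Theorem \ref{thm1}(1) gives $\psi(f)(y) = f(\theta(y))$ for every $y \in W$, and Theorem \ref{thm1}(2) gives $\theta(y) \in V$. Hence $\{\psi(f)(y)\mid y\in W\} \subset \{f(x)\mid x \in V\}$, which immediately yields $(1)$ and $(2)$.

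For the equality under the extra hypotheses, I would treat $(1)$ first. Set $c := \sup\{\psi(f)(y)\mid y \in W\}$. The cases $c = +\infty$ and $c = -\infty$ are handled quickly: the former follows from part $(1)$, and the latter, using $F = \boldsymbol{E}(W)$, forces $\psi(f) = \psi(-\infty)$ in the quotient, whence $f = -\infty$ by injectivity of $\psi$, so $\sup_V f = -\infty$ as well. The interesting case is $c \in \boldsymbol{R}$: since $\psi(f)(y) \le c$ for all $y \in W$, the functions $\psi(f)\oplus c$ and the constant $c$ agree on $W$. Because $\psi$ is a $\boldsymbol{T}$-algebra homomorphism, $\psi(f)\oplus c = \psi(f \oplus c)$ and $\psi(c) = c$; combined with $F = \boldsymbol{E}(W)$, this gives $\psi(f \oplus c) = \psi(c)$ in $\overline{\boldsymbol{T}(Y_1,\ldots,Y_m)}/F$. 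The injectivity of $\psi$ then forces $f \oplus c = c$ in $\overline{\boldsymbol{T}(X_1,\ldots,X_n)}/E$, and evaluating at an arbitrary $x \in V$ yields $\max(f(x), c) = c$, i.e., $f(x) \le c$. Taking supremum over $V$ gives the reverse inequality, hence equality.

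The argument for $(2)$ is strictly parallel: set $c := \inf\{\psi(f)(y) \mid y \in W\}$, dispose of $c = -\infty$ trivially and $c = +\infty$ (excluded in practice since $W \ne \varnothing$ and values are in $\boldsymbol{T}$), and for $c \in \boldsymbol{R}$ use $\psi(f)(y) \ge c$ to conclude $\psi(f \oplus c) = \psi(f)$ on $W$, whence $f\oplus c = f$ in the quotient by injectivity. Evaluation at any $x \in V$ then gives $f(x) \ge c$.

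I expect no deep obstacle; the main subtlety is the careful separation of the $\pm\infty$ cases for $c$ and verifying that evaluation of a class in $\overline{\boldsymbol{T}(X_1,\ldots,X_n)}/E$ at points of $V$ is well-defined so that the quotient equalities $f \oplus c = c$ and $f \oplus c = f$ may be read off pointwise on $V$. The conceptual content is the trick of replacing the pointwise bound on $\psi(f)$ by a genuine identity in the quotient semifield via $\oplus$ with the constant $c$, which is exactly what the hypothesis $F = \boldsymbol{E}(W)$ together with the injectivity of $\psi$ is designed to enable.
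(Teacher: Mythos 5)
Your proof is correct and takes essentially the same approach as the paper: the inequalities follow from the inclusion $\{ \psi(f)(y) \,|\, y \in W \} \subset \{ f(x) \,|\, x \in V \}$ given by Theorem \ref{thm1}(1),(2), and the equality case rests on the same trick of converting a pointwise bound into an identity involving $\oplus$ with a real constant and transporting it across $\psi$ using injectivity together with $F = \boldsymbol{E}(W)$. The only cosmetic difference is the direction: you take $c = \operatorname{sup}\{ \psi(f)(y) \,|\, y \in W \}$ and pull the identity $f \oplus c = c$ back through $\psi$, whereas the paper takes $a = \operatorname{sup}\{ f(x) \,|\, x \in V \}$ and, for each $b < a$, pushes the non-identity $f \oplus b \not= b$ forward to produce a witness $y \in W$ with $\psi(f)(y) > b$.
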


\begin{proof}
By Theorem \ref{thm1}(1), (2), we have
\begin{align*}
\{ f(x) \,|\, x \in V \} \supset \{ f(\theta(y)) \,|\, y \in W \} = \{ \psi(f)(y) \,|\, y \in W \}.
\end{align*}
Hence we have the conclusions.

Assume that $W$ is nonempty, $\psi$ is injective and $F = \boldsymbol{E}(W)$.
By Theorem \ref{thm1}(2), $\varnothing \not= \operatorname{Im}(\theta) \subset V$.
Since $V \not= \varnothing$ and $W \not= \varnothing$, by Lemma \ref{lemma3}, both $\overline{\boldsymbol{T}(X_1, \ldots, X_n)} / E$ and $\overline{\boldsymbol{T}(Y_1, \ldots, Y_m)} / F$ are semifields over $\boldsymbol{T}$.
Hence, for any $t \in \boldsymbol{T}$, we have $\psi(t) = t$.
Thus, if $f \in \boldsymbol{T}$, the assertions are clear.

Assume that $f$ is not a constant.
Let $a := \operatorname{sup}\{ f(x) \,|\, x \in V \}$.
In this case, since $f \not= -\infty$, by Lemma \ref{lemma2}, $a \not= -\infty$.
As $V \not= \varnothing$, $a$ is in $\boldsymbol{R} \cup \{ \infty \}$.
For $b \in \boldsymbol{R}$, as $a$ is the supremum of $f$ on $V$, we have $f \oplus b \not= b$ when $b < a$.
Therefore, since $\psi$ is injective, if $b < a$, then we have
\begin{align*}
\psi(f) \oplus b = \psi(f) \oplus \psi(b) = \psi(f \oplus b) \not= \psi(b) = b
\end{align*}
Thus, for any such $b$, there exists $y \in W$ such that $\psi(f)(y) \oplus b \not= b$ as $F = \boldsymbol{E}(W)$.
This means that $\psi(f)(y) > b$, and hence $a \le \operatorname{sup}\{ \psi(f)(y) \,|\, y \in W \}$.

For the infimums of $f$ and $\psi(f)$, we can obtain the conclusion by applying the supremum case for $f^{\odot (-1)} = -f$ and $\psi(f^{\odot (-1)}) = \psi(f)^{\odot (-1)} = -\psi(f)$ since 
\begin{align*}
\operatorname{inf}\{ f(x) \,|\, x \in V \} = - \operatorname{sup}\{ -f(x) \,|\, x \in V \}
\end{align*}
and 
\[
\operatorname{inf}\{ \psi(f)(y) \,|\, y \in W \} = - \operatorname{sup}\{ -\psi(f)(y) \,|\, y \in W \}. \qedhere
\]
\end{proof}

Now we can give another proof of \cite[Theorem 1.1 and Corollary 3.22]{JuAe5}:

\begin{cor}
Let $\Gamma_1$ and $\Gamma_2$ be tropical curves and $\psi : \operatorname{Rat}(\Gamma_1) \to \operatorname{Rat}(\Gamma_2)$ a $\boldsymbol{T}$-algebra homomorphism.
If $\psi$ is injective, then there exists a unique surjective morphism $\varphi : \Gamma_2 \twoheadrightarrow \Gamma_1$ such that for any $f \in \operatorname{Rat}(\Gamma_1)$, $\psi(f) = f \circ \varphi$.
\end{cor}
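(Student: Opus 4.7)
My plan is to reduce to Theorem~\ref{thm1} by presenting both $\operatorname{Rat}(\Gamma_1)$ and $\operatorname{Rat}(\Gamma_2)$ as quotients of tropical rational function semifields via generating sets, and then translate the resulting map between congruence varieties back to a morphism $\Gamma_2 \to \Gamma_1$. By \cite[Theorem 1.1]{JuAe3}, I would choose generators $f_1, \ldots, f_n$ for $\operatorname{Rat}(\Gamma_1)$ and $g_1, \ldots, g_m$ for $\operatorname{Rat}(\Gamma_2)$ over $\boldsymbol{T}$; Lemma~\ref{lemma4} yields surjective $\boldsymbol{T}$-algebra homomorphisms $\psi_1 : \overline{\boldsymbol{T}(X_1, \ldots, X_n)} \twoheadrightarrow \operatorname{Rat}(\Gamma_1)$ and $\psi_2 : \overline{\boldsymbol{T}(Y_1, \ldots, Y_m)} \twoheadrightarrow \operatorname{Rat}(\Gamma_2)$. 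Set $E := \operatorname{Ker}(\psi_1)$, $F := \operatorname{Ker}(\psi_2)$, $V := \boldsymbol{V}(E)$, $W := \boldsymbol{V}(F)$. Proposition~\ref{prop2}(4),(5) provides homeomorphisms $\theta_1 : \Gamma_1 \setminus \Gamma_{1,\infty} \to V$ and $\theta_2 : \Gamma_2 \setminus \Gamma_{2,\infty} \to W$ and gives $E = \boldsymbol{E}(V)$, $F = \boldsymbol{E}(W)$. The given injective $\psi$ corresponds, via the quotient isomorphisms induced by $\psi_1$ and $\psi_2$, to an injective $\boldsymbol{T}$-algebra homomorphism $\bar\psi : \overline{\boldsymbol{T}(X_1, \ldots, X_n)}/E \to \overline{\boldsymbol{T}(Y_1, \ldots, Y_m)}/F$. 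Applying Theorem~\ref{thm1}(1)--(3) to $\bar\psi$ produces a continuous $\theta : W \to V$, and $\tilde\varphi := \theta_1^{-1} \circ \theta \circ \theta_2 : \Gamma_2 \setminus \Gamma_{2,\infty} \to \Gamma_1 \setminus \Gamma_{1,\infty}$ satisfies $\psi(f)(y) = f(\tilde\varphi(y))$ for every $f \in \operatorname{Rat}(\Gamma_1)$ and every finite $y$, by Theorem~\ref{thm1}(1).

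To extend $\tilde\varphi$ to all of $\Gamma_2$: for $y_\infty \in \Gamma_{2,\infty}$ and any sequence of finite points $y_k \to y_\infty$, continuity of each $\psi(f_i) \in \operatorname{Rat}(\Gamma_2)$ on $\Gamma_2$ gives $f_i(\tilde\varphi(y_k)) = \psi(f_i)(y_k) \to \psi(f_i)(y_\infty)$. By compactness of $\Gamma_1$, $\{\tilde\varphi(y_k)\}$ has convergent subsequences, and any two limits $z, z'$ satisfy $f_i(z) = f_i(z')$ for all $i$; since $f_1, \ldots, f_n$ generate $\operatorname{Rat}(\Gamma_1)$ and $\operatorname{Rat}(\Gamma_1)$ separates points of $\Gamma_1$ (via chip firing functions), I conclude $z = z'$. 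Setting $\varphi(y_\infty) := z$ yields a continuous $\varphi : \Gamma_2 \to \Gamma_1$, and the identity $\psi(f) = f \circ \varphi$ extends to all of $\Gamma_2$ by continuity. Surjectivity of $\varphi$ then follows from injectivity of $\psi$: if $\operatorname{Im}(\varphi) \subsetneq \Gamma_1$, pick $z \notin \operatorname{Im}(\varphi)$ and $\varepsilon > 0$ with $\operatorname{dist}(z, \operatorname{Im}(\varphi)) > \varepsilon$; then $h := \operatorname{CF}(\operatorname{Im}(\varphi), \varepsilon) \in \operatorname{Rat}(\Gamma_1)$ vanishes on $\operatorname{Im}(\varphi)$ while $h(z) = -\varepsilon$, so $\psi(h) = h \circ \varphi \equiv 0 = \psi(0)$ with $h \ne 0$, contradicting injectivity. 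Uniqueness follows by the same separation-of-points argument: any $\varphi'$ with $\psi(f) = f \circ \varphi'$ for every $f$ forces $f(\varphi(y)) = f(\varphi'(y))$ for all $f$ and all $y$, hence $\varphi = \varphi'$.

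The main obstacle I anticipate is verifying that $\varphi$ is a morphism in Chan's sense (Subsection~\ref{subsection2.5}), namely piecewise affine with a non-negative integer degree on each edge of suitable loopless models. My approach would be to refine the canonical models of both curves so that all zeros and poles of the $f_i$ and of the $\psi(f_i)$ become vertices; on each edge $e$ of the refined model of $\Gamma_2$, every $\psi(f_i) = f_i \circ \varphi$ is affine with integer slope, and combining this with the gcd-one condition on the slopes of $f_1, \ldots, f_n$ along each edge of $\Gamma_1$ established inside the proof of Proposition~\ref{prop2}(5), the restriction of $\varphi$ to $e$ has a well-defined non-negative integer degree.
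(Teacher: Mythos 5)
Your overall architecture matches the paper's: generators via \cite[Theorem 1.1]{JuAe3}, Lemma \ref{lemma4}, Proposition \ref{prop2}, and Theorem \ref{thm1} to get $\theta$ and hence $\tilde\varphi = \theta_1^{-1}\circ\theta\circ\theta_2$, followed by the same model-refinement argument for the morphism property and the same chip-firing separation argument for uniqueness. Where you genuinely diverge is surjectivity: the paper first proves that $\operatorname{Im}(\theta)$ is closed (via Remark \ref{rem1} and the sup/inf equalities of Corollary \ref{cor1}) and then invokes Theorem \ref{thm1}(5), whereas you prove surjectivity directly on the curves by applying $\psi$ to $\operatorname{CF}(\operatorname{Im}(\varphi),\varepsilon)$ and contradicting injectivity. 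Your route is more elementary and arguably cleaner --- $\operatorname{Im}(\varphi)$ is compact, connected and contains finite points, so the chip-firing move is legitimate --- but note that it only becomes available \emph{after} you have extended $\varphi$ continuously to all of $\Gamma_2$, so the extension step carries more weight in your proof than in the paper's.

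That extension step is where you have a genuine gap. You define $\varphi(y_\infty)$ as the common limit of $\tilde\varphi(y_k)$ and justify uniqueness of the limit by: any two subsequential limits $z,z'$ satisfy $f_i(z)=f_i(z')$ for all $i$, hence (since the $f_i$ generate and $\operatorname{Rat}(\Gamma_1)$ separates points) $z=z'$. The implicit middle step --- that $f_i(z)=f_i(z')$ for all generators forces $g(z)=g(z')$ for every $g\in\operatorname{Rat}(\Gamma_1)$ --- fails when the common values include $\pm\infty$, which by Remark \ref{rem1} they must whenever $z$ or $z'$ is a point at infinity: evaluation at such points does not commute with the semifield operations (indeterminate forms like $\infty\odot(-\infty)$ arise), so equality on generators does not propagate to rational expressions. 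Concretely, on $\Gamma_1=[-\infty,\infty]$ the functions $f_1(t)=\max(t,-2t)$ and $f_2(t)=|t|$ generate $\operatorname{Rat}(\Gamma_1)$ over $\boldsymbol{T}$ (since $f_2\odot(f_1\odot f_2^{\odot(-1)})^{\odot(-1)}$ and its companions recover $t$), yet $f_1(\pm\infty)=f_2(\pm\infty)=+\infty$, so the extended coordinate map does not separate the two points at infinity. The conclusion you want is still true, but it needs a different argument, e.g.: the limit set $\bigcap_{t}\overline{\tilde\varphi((t,y_\infty))}$ is compact and connected; if it contained a finite point $w$ together with a point at infinity, then $\psi(f_i)(y_\infty)=f_i(w)\in\boldsymbol{R}$ for all $i$ would contradict Remark \ref{rem1} applied to that point at infinity; hence the limit set is either a single finite point or a connected subset of the discrete set $\Gamma_{1\infty}$, i.e.\ a singleton in either case. (To be fair, the paper itself only asserts that $\theta_1^{-1}\circ\theta\circ\theta_2$ ``naturally extends,'' so you are not alone in underspecifying this point --- but the specific justification you offer is the one step in your proposal that would not survive scrutiny.)
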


\begin{proof}
By \cite[Theorem 1.1]{JuAe3}, we can choose $f_1, \ldots, f_n \in \operatorname{Rat}(\Gamma_1) \setminus \{ -\infty \}$ and $g_1, \ldots, g_m \in \operatorname{Rat}(\Gamma_2) \setminus \{ -\infty \}$ such that $\operatorname{Rat}(\Gamma_1) = \boldsymbol{T}(f_1, \ldots, f_n)$ and $\operatorname{Rat}(\Gamma_2) = \boldsymbol{T}(g_1, \ldots, g_m)$.
By Lemma \ref{lemma4}, the correspondences $X_i \mapsto f_i$ and $Y_i \mapsto g_i$ induce the surjective $\boldsymbol{T}$-algebra homomorphisms 
\begin{align*}
\psi_1 : \overline{\boldsymbol{T}(X_1, \ldots, X_n)} \twoheadrightarrow \operatorname{Rat}(\Gamma_1)
\end{align*}
and
\begin{align*}
\psi_2 : \overline{\boldsymbol{T}(Y_1, \ldots, Y_m)} \twoheadrightarrow \operatorname{Rat}(\Gamma_2)
\end{align*}
respectively.
Let 
\begin{align*}
\pi_{\operatorname{Ker}(\psi_1)} : \overline{\boldsymbol{T}(X_1, \ldots, X_n)} \twoheadrightarrow \overline{\boldsymbol{T}(X_1, \ldots, X_n)} / \operatorname{Ker}(\psi_1)
\end{align*}
and
\begin{align*}
\pi_{\operatorname{Ker}(\psi_2)} : \overline{\boldsymbol{T}(Y_1, \ldots, Y_m)} \twoheadrightarrow \overline{\boldsymbol{T}(Y_1, \ldots, Y_m)} / \operatorname{Ker}(\psi_2)
\end{align*}
be the natural surjective $\boldsymbol{T}$-algebra homomorphisms.
Let
\begin{align*}
\overline{\psi_1} : \overline{\boldsymbol{T}(X_1, \ldots, X_n)} / \operatorname{Ker}(\psi_1) \to \operatorname{Rat}(\Gamma_1)
\end{align*}
and
\begin{align*}
\overline{\psi_2} : \overline{\boldsymbol{T}(Y_1, \ldots, Y_m)} / \operatorname{Ker}(\psi_2) \to \operatorname{Rat}(\Gamma_2)
\end{align*}
be the $\boldsymbol{T}$-algebra isomomorphisms induced by $\psi_1$ and $\psi_2$ respectively, i.e., $\overline{\psi_i} \circ \pi_{\operatorname{Ker}(\psi_i)} = \psi_i$.
By Proposition \ref{prop2}, $\operatorname{Ker}(\psi_i) = \boldsymbol{E}(\boldsymbol{V}(\operatorname{Ker}(\psi_i)))$ and 
\begin{align*}
\theta_1 : \Gamma_1 \setminus \Gamma_{1 \infty} \to \boldsymbol{V}(\operatorname{Ker}(\psi_1)); x \mapsto (f_1(x), \ldots, f_n(x))
\end{align*}
and
\begin{align*}
\theta_2 : \Gamma_2 \setminus \Gamma_{2 \infty} \to \boldsymbol{V}(\operatorname{Ker}(\psi_2)); x \mapsto (g_1(x), \ldots, g_m(x))
\end{align*}
are isometries.
By Theorem \ref{thm1}$(2)$, we have
\begin{align*}
\theta : \boldsymbol{V}(&\operatorname{Ker}(\psi_2)) &&\to &&\boldsymbol{V}(\operatorname{Ker}(\psi_1));\\
&y &&\mapsto &(&(\overline{\psi_2}^{-1} \circ \psi \circ \overline{\psi_1})(\pi_{\operatorname{Ker}(\psi_1)}(X_1))(y), \ldots,\\
&&&&&(\overline{\psi_2}^{-1} \circ \psi \circ \overline{\psi_1})(\pi_{\operatorname{Ker}(\psi_1)}(X_n))(y)).
\end{align*}
Since $\Gamma_i \not= \varnothing$, we have $\boldsymbol{V}(\operatorname{Ker}(\psi_i)) \not= \varnothing$.
As $\overline{\psi_2}^{-1} \circ \psi \circ \overline{\psi_1}$ is injective, by Corollary \ref{cor1}, we have
\begin{align*}
&~ \operatorname{sup}\{ \pi_{\operatorname{Ker}(\psi_1)}(X_i)(x) \,|\, x \in \boldsymbol{V}(\operatorname{Ker}(\psi_1)) \}\\
=&~ \operatorname{sup}\{ (\overline{\psi_2}^{-1} \circ \psi \circ \overline{\psi_1})(\pi_{\operatorname{Ker}(\psi_1)}(X_i))(y) \,|\, y \in \boldsymbol{V}(\operatorname{Ker}(\psi_2)) \}\\
\end{align*}
and
\begin{align*}
&~ \operatorname{inf}\{ \pi_{\operatorname{Ker}(\psi_1)}(X_i)(x) \,|\, x \in \boldsymbol{V}(\operatorname{Ker}(\psi_1)) \}\\
=&~ \operatorname{inf}\{ (\overline{\psi_2}^{-1} \circ \psi \circ \overline{\psi_1})(\pi_{\operatorname{Ker}(\psi_1)}(X_i))(y) \,|\, y \in \boldsymbol{V}(\operatorname{Ker}(\psi_2)) \}.
\end{align*}
By Remark \ref{rem1}, for any $x \in \Gamma_{1\infty}$, there exists $i$ such that $(\overline{\psi_1} \circ \pi_{\operatorname{Ker}(\psi_1)})(X_i)(x) = \infty$ or $-\infty$.
Hence 
\begin{align*}
\operatorname{sup}\{ (\overline{\psi_2}^{-1} \circ \psi \circ \overline{\psi_1})(\pi_{\operatorname{Ker}(\psi_1)}(X_i))(y) \,|\, y \in \boldsymbol{V}(\operatorname{Ker}(\psi_2)) \} = \infty
\end{align*}
or
\begin{align*}
\operatorname{inf}\{ (\overline{\psi_2}^{-1} \circ \psi \circ \overline{\psi_1})(\pi_{\operatorname{Ker}(\psi_1)}(X_i))(y) \,|\, y \in \boldsymbol{V}(\operatorname{Ker}(\psi_2)) \} = -\infty.
\end{align*}
Since $(\psi \circ \overline{\psi_1})(\pi_{\operatorname{Ker}(\psi_1)}(X_i))$ and $(\overline{\psi_2}^{-1} \circ \psi \circ \overline{\psi_1})(\pi_{\operatorname{Ker}(\psi_1)}(X_i))$ take the same value at the points correponding by $\theta_2$ by Proposition \ref{prop2}$(1)$, this means that there exists $y \in \Gamma_{2\infty}$ such that $(\psi \circ \overline{\psi_1}) (\pi_{\operatorname{Ker}(\psi_1)}(X_i))(y) = \infty$ or $-\infty$.
As $\theta_1^{-1}, \theta_2$ and $\theta$ are continuous, for arbitrary sequence $\{ y_k \}$ conversing to $y$, the sequence $\{ (\theta_1^{-1} \circ \theta \circ \theta_2) (y_k) \}$ converges to $x$.
Hence $\operatorname{Im}(\theta)$ is closed.
By Theorem \ref{thm1}$(5)$, $\theta$ is surjective.
Since $\theta_1^{-1}$ and $\theta_2$ are isometries and $\theta$ is continuous, $\theta_1^{-1} \circ \theta \circ \theta_2$ are surjective and continuous.
This $\theta_1^{-1} \circ \theta \circ \theta_2$ naturally extends to a surjective continuous map from $\Gamma_2$ to $\Gamma_1$ and it is the morphism $\varphi$ we want.
In fact, by the definition of $\theta_1^{-1} \circ \theta \circ \theta_2$, for any $y \in \Gamma_2 \setminus \Gamma_{2\infty}$ and $f \in \operatorname{Rat}(\Gamma_1)$, we have $\psi(f)(y) = f((\theta_1^{-1} \circ \theta \circ \theta_2)(y)) = f(\varphi(y))$, and thus for any $y \in \Gamma_2$, we have $\varphi(f)(y) = f(\varphi(y))$.
This means that $\psi(f) = f \circ \varphi$ holds.
Since $(\overline{\psi_2}^{-1} \circ \psi \circ \overline{\psi_1}) (\pi_{\operatorname{Ker}(\psi_1)}(X_i))$ is peacewise affine with integer slopes and $\theta_1^{-1}$ and $\theta_2$ are isometries, $\varphi$ is a morphism.
If there exists a morphism $\varphi^{\prime}$ satisfying $\psi(f) = f \circ \varphi^{\prime}$ for any $f \in \operatorname{Rat}(\Gamma_1)$, then, for any $y \in \Gamma_2 \setminus \Gamma_{2\infty}$, we have
\begin{align*}
0 =&~ \operatorname{CF}( \{ \varphi(y) \}, \infty) (\varphi(y))\\
=&~ (\operatorname{CF}(\{ \varphi(y) \}, \infty) \circ \varphi) (y)\\
=&~ \psi(\operatorname{CF}(\{ \varphi(y) \}, \infty))(y)\\
=&~ (\operatorname{CF}(\{ \varphi(y) \}, \infty) \circ \varphi^{\prime})(y)\\
=&~ \operatorname{CF}(\{ \varphi(y) \}, \infty)(\varphi^{\prime}(y)).
\end{align*}
As $\operatorname{CF}(\{ \varphi(y) \}, \infty)$ takes zero at and only at $\varphi(y)$, we have $\varphi(y) = \varphi^{\prime}(y)$.
Since both $\varphi$ and $\varphi^{\prime}$ are continuous, we have $\varphi = \varphi^{\prime}$.
\end{proof}

For a nonempty set $A$, we write the identity map of $A$ as $\operatorname{id}_A$.

By the following lemma and corollary, we know that we can drop the condition ``$F = \boldsymbol{E}(W)$" in Theorem \ref{thm1}(6):

\begin{lemma}
	\label{lemma5}
Let $E, F$ and $G$ be congruences on $\overline{\boldsymbol{T}(X_1, \dots, X_n)}$, $\overline{\boldsymbol{T}(Y_1, \dots, Y_m)}$ and $\overline{\boldsymbol{T}(Z_1, \dots, Z_l)}$ respectively.
Let $\pi_E : \overline{\boldsymbol{T}(X_1, \ldots, X_n)} \twoheadrightarrow \overline{\boldsymbol{T}(X_1, \ldots, X_n)} / E$ and $\pi_F : \overline{\boldsymbol{T}(Y_1, \ldots, Y_m)} \twoheadrightarrow \overline{\boldsymbol{T}(Y_1, \ldots, Y_m)} / F$ be the natural surjective $\boldsymbol{T}$-algebra homomorphisms.
Let $\psi : \overline{\boldsymbol{T}(X_1, \dots, X_n)} / E \to \overline{\boldsymbol{T}(Y_1, \dots, Y_m)} / F$ and $\phi : \overline{\boldsymbol{T}(Y_1, \dots, Y_m)} / F \to \overline{\boldsymbol{T}(Z_1, \dots, Z_l)} / G$ be $\boldsymbol{T}$-algebra homomorphisms.
For
\begin{align*}
&\theta_{\psi} : \boldsymbol{V}(F) \to \boldsymbol{V}(E); y \mapsto (\psi(\pi_E(X_1))(y), \ldots, \psi(\pi_E(X_n))(y)),\\
&\theta_{\phi} : \boldsymbol{V}(G) \to \boldsymbol{V}(F); z \mapsto (\phi(\pi_F(Y_1))(z), \ldots, \phi(\pi_F(Y_m))(z))
\end{align*}
and
\begin{align*}
\theta_{\phi \circ \psi} : \boldsymbol{V}(G) \to \boldsymbol{V}(E); z \mapsto ((\phi \circ \psi)(\pi_E(X_1))(z), \ldots, (\phi \circ \psi)(\pi_E(X_n))(z)),
\end{align*}
the following hold:

$(1)$ $\theta_{\phi \circ \psi} = \theta_{\psi} \circ \theta_{\phi}$, and

$(2)$ if $\overline{\boldsymbol{T}(X_1, \ldots, X_n)} / E = \overline{\boldsymbol{T}(Z_1, \ldots, Z_l)} / G$ and $\phi \circ \psi = \operatorname{id}_{\overline{\boldsymbol{T}(X_1, \ldots, X_n)} / E}$, then $\theta_{\phi \circ \psi} = \operatorname{id}_{\boldsymbol{V}(E)}$.
\end{lemma}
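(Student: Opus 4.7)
The plan is to prove both parts by direct coordinate-wise evaluation, using Theorem \ref{thm1}(1) as the key tool.

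For part (1), I would fix $z \in \boldsymbol{V}(G)$ and compute $\theta_{\phi \circ \psi}(z)$ and $\theta_{\psi}(\theta_{\phi}(z))$ coordinate by coordinate. For each $i \in \{ 1, \ldots, n \}$, choose a representative $g_i \in \overline{\boldsymbol{T}(Y_1, \ldots, Y_m)}$ of the class $\psi(\pi_E(X_i)) \in \overline{\boldsymbol{T}(Y_1, \ldots, Y_m)} / F$, so that $\psi(\pi_E(X_i)) = \pi_F(g_i)$. Applying Theorem \ref{thm1}(1) to the homomorphism $\phi$ at the point $z \in \boldsymbol{V}(G)$ with input $g_i$ gives $\phi(\pi_F(g_i))(z) = g_i(\theta_{\phi}(z))$. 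The left-hand side is $(\phi \circ \psi)(\pi_E(X_i))(z)$, which is by definition the $i$-th coordinate of $\theta_{\phi \circ \psi}(z)$. The right-hand side is $\pi_F(g_i)(\theta_{\phi}(z)) = \psi(\pi_E(X_i))(\theta_{\phi}(z))$, which is by definition the $i$-th coordinate of $\theta_{\psi}(\theta_{\phi}(z))$. Matching coordinates for all $i$ yields $\theta_{\phi \circ \psi}(z) = \theta_{\psi}(\theta_{\phi}(z))$.

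For part (2), under the hypothesis we have $\overline{\boldsymbol{T}(X_1, \ldots, X_n)} / E = \overline{\boldsymbol{T}(Z_1, \ldots, Z_l)} / G$, so in particular $\boldsymbol{V}(G) = \boldsymbol{V}(E)$, and $(\phi \circ \psi)(\pi_E(X_i)) = \pi_E(X_i)$ for every $i$. Applying Theorem \ref{thm1}(1) to the composite homomorphism $\phi \circ \psi$ with the input $X_i$, for any $z \in \boldsymbol{V}(G) = \boldsymbol{V}(E)$ the $i$-th coordinate of $\theta_{\phi \circ \psi}(z)$ equals $(\phi \circ \psi)(\pi_E(X_i))(z) = \pi_E(X_i)(z) = X_i(z)$, which is the $i$-th coordinate of $z$ itself. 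Hence $\theta_{\phi \circ \psi}(z) = z$, giving $\theta_{\phi \circ \psi} = \operatorname{id}_{\boldsymbol{V}(E)}$.

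The only delicate point worth checking is that evaluation of a quotient element $\pi_F(g_i)$ at a point of $\boldsymbol{V}(F)$ is well-defined independently of the representative $g_i$, but this is exactly built into the definition of the congruence variety $\boldsymbol{V}(F)$. Beyond this bookkeeping, the argument is a direct unwinding of definitions together with a single application of Theorem \ref{thm1}(1), so I do not anticipate any serious obstacle.
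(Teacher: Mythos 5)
Your proof is correct and follows essentially the same route as the paper: part (1) is the same coordinate-wise computation, with the key middle equality $\psi(\pi_E(X_i))(\theta_{\phi}(z)) = \phi(\psi(\pi_E(X_i)))(z)$ being exactly the instance of Theorem \ref{thm1}(1) you cite (the paper performs this step inline without naming it), and part (2) is the unwinding the paper dismisses as clear.
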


\begin{proof}
$(1)$ For any $z \in \boldsymbol{V}(G)$, we have
\begin{align*}
(\theta_{\psi} \circ \theta_{\phi})(z) =&~ \theta_{\psi}(\theta_{\phi}(z))\\
=&~ \theta_{\psi}(\phi(\pi_F(Y_1))(z), \ldots, \phi(\pi_F(Y_m))(z))\\
=&~ (\psi(\pi_E(X_1))(\phi(\pi_F(Y_1))(z), \ldots, \phi(\pi_F(Y_m))(z)), \ldots,\\
&~ \psi(\pi_E(X_n))(\phi(\pi_F(Y_1))(z), \ldots, \phi(\pi_F(Y_m))(z)))\\
=&~ (\phi(\psi(\pi_E(X_1)))(z), \ldots, \phi(\psi(\pi_E(X_n)))(z))\\
=&~ ((\phi \circ \psi)(\pi_E(X_1))(z), \ldots, (\phi \circ \psi)(\pi_E(X_n))(z))\\
=&~ \theta_{\phi \circ \psi} (z).
\end{align*}

$(2)$ It is clear.
\end{proof}

\begin{cor}
	\label{cor2}
In the same setting in Theorem \ref{thm1}, if $\psi$ is an isomorphism, then $V$ and $W$ are homeomorphic via $\theta$.
\end{cor}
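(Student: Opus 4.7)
The plan is to show that, when $\psi$ is an isomorphism, the map $\theta$ has a continuous two-sided inverse, namely the $\theta$-map (in the sense of Theorem \ref{thm1}) associated with $\phi := \psi^{-1}$. Continuity will be automatic from Theorem \ref{thm1}(3), and the two-sided inverse property will follow from a double application of Lemma \ref{lemma5}; no use of the hypothesis $F = \boldsymbol{E}(W)$ from Theorem \ref{thm1}(6) should be required.

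Concretely, set $\phi := \psi^{-1} : \overline{\boldsymbol{T}(Y_1, \ldots, Y_m)} / F \to \overline{\boldsymbol{T}(X_1, \ldots, X_n)} / E$, and define, following the recipe of Theorem \ref{thm1},
\[
\theta_{\phi} : V \to \boldsymbol{T}^m;\; x \mapsto (\phi(\pi_F(Y_1))(x), \ldots, \phi(\pi_F(Y_m))(x)).
\]
By Theorem \ref{thm1}(2) applied to $\phi$, the image of $\theta_{\phi}$ lies in $W$, and by Theorem \ref{thm1}(3) the map $\theta_{\phi} : V \to W$ is continuous. The same two parts of Theorem \ref{thm1}, applied to $\psi$, already tell us that $\theta : W \to V$ is a well-defined continuous map.

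Now invoke Lemma \ref{lemma5} in two configurations. In the first, take the third tropical rational function semifield to be $\overline{\boldsymbol{T}(X_1, \ldots, X_n)}$ and $G := E$, with our $\phi$ playing the role of the second homomorphism in the lemma. Since $\phi \circ \psi = \operatorname{id}_{\overline{\boldsymbol{T}(X_1, \ldots, X_n)} / E}$, parts (1) and (2) of Lemma \ref{lemma5} combine to give $\theta \circ \theta_{\phi} = \theta_{\phi \circ \psi} = \operatorname{id}_V$. Swapping the roles of $\psi$ and $\phi$ (so the third semifield becomes $\overline{\boldsymbol{T}(Y_1, \ldots, Y_m)}$ with $G := F$) and using $\psi \circ \phi = \operatorname{id}_{\overline{\boldsymbol{T}(Y_1, \ldots, Y_m)} / F}$ yields, in exactly the same way, $\theta_{\phi} \circ \theta = \operatorname{id}_W$.

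Hence $\theta$ is a continuous bijection $W \to V$ with continuous inverse $\theta_{\phi}$, and is therefore a homeomorphism. There is no substantial obstacle to overcome: the argument is purely formal, and the main point to attend to is matching our data carefully to the variables $(E, F, G)$ and $(\psi, \phi)$ of Lemma \ref{lemma5} in each of the two invocations. It is worth noting where the previously needed condition $F = \boldsymbol{E}(W)$ disappears: in Theorem \ref{thm1}(6) it served to force $\operatorname{Im}(\theta) \supset V$, whereas here the surjectivity of $\theta$ onto $V$ is produced automatically by the composition identity $\theta \circ \theta_{\phi} = \operatorname{id}_V$.
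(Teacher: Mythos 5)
Your proof is correct and follows essentially the same route as the paper, whose entire proof is ``By Theorem \ref{thm1}(4) and Lemma \ref{lemma5}, we have the conclusion'': the substance in both cases is the double application of Lemma \ref{lemma5} to $\psi$ and $\psi^{-1}$ to produce a continuous two-sided inverse of $\theta$. The only (harmless) difference is that you draw continuity and well-definedness from Theorem \ref{thm1}(2) and (3) rather than citing part (4), which your composition identities render unnecessary.
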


\begin{proof}
By Theorem \ref{thm1}(4) and Lemma \ref{lemma5}, we have the conclusion.
\end{proof}

In Theorem \ref{thm1}, we began our consideration from assuming congruences are given.
In the following proposition, we will assume subsets of Euclidean spaces are given.
For $V \subset \boldsymbol{R}^n$, let $F(V, \boldsymbol{T})$ denote the set of all functions $V \to \boldsymbol{T}$.
This $F(V, \boldsymbol{T})$ becomes a $\boldsymbol{T}$-algebra with natural operations.

\begin{prop}
	\label{prop3}
Let $V$ be a subset of $\boldsymbol{R}^n$ and $W$ a nonempty subset of $\boldsymbol{R}^m$.
For $f_1, \ldots, f_n \in \overline{\boldsymbol{T}(Y_1, \ldots, Y_m)} / \boldsymbol{E}(W)$, assume that the image of $\theta : W \to \boldsymbol{T}^n; y \mapsto (f_1(y), \ldots, f_n(y))$ is contained in $V$.
Let $\theta^{\ast} : \overline{\boldsymbol{T}(X_1, \ldots, X_n)} / \boldsymbol{E}(V) \to F(W, \boldsymbol{T}); f \mapsto f \circ \theta$.
Then the following hold:

$(1)$ $V \not= \varnothing$,

$(2)$ $\operatorname{Im}(\theta^{\ast}) \subset \overline{\boldsymbol{T}(Y_1, \ldots, Y_m)} / \boldsymbol{E}(W) \subset F(W, \boldsymbol{T})$,

$(3)$ $\theta^{\ast}$ is a $\boldsymbol{T}$-algebra homomorphism,

$(4)$ if $\operatorname{Im}(\theta) \supset V$, then $\theta^{\ast}$ is injective, and

$(5)$ if $V$ and $W$ are homeomorphic via $\theta$ and there exist $g_1, \ldots, g_m \in \overline{\boldsymbol{T}(X_1, \dots, X_n)} / \boldsymbol{E}(V)$ such that for any $x \in V$, $\theta^{-1} (x) = (g_1(x), \ldots, g_m(x))$, then $\overline{\boldsymbol{T}(X_1, \ldots, X_n)} / \boldsymbol{E}(V)$ and $\overline{\boldsymbol{T}(Y_1, \ldots, Y_m)} / \boldsymbol{E}(W)$ are isomorphic via $\theta^{\ast}$.
\end{prop}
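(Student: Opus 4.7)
The plan is to realize $\theta^{\ast}$ as the map on quotients induced by the substitution homomorphism $X_i \mapsto f_i$, and to handle (5) by using the given $g_j$'s to build an explicit two-sided inverse via function composition. Part (1) is immediate: since $W \neq \varnothing$, any $y \in W$ yields $\theta(y) \in V$, so $V \neq \varnothing$. For (2) and (3), the hypothesis $\theta(W) \subset V \subset \boldsymbol{R}^n$ forces $f_i(y) \in \boldsymbol{R}$ for every $y \in W$, so each $f_i$ is nonzero in the quotient. Moreover, since any nonzero element of $\overline{\boldsymbol{T}(Y_1, \ldots, Y_m)}$ is $\boldsymbol{R}$-valued on $\boldsymbol{R}^m$ and $W \neq \varnothing$, condition (1) of Lemma \ref{lemma2} shows $\overline{\boldsymbol{T}(Y_1, \ldots, Y_m)} / \boldsymbol{E}(W)$ is a semifield over $\boldsymbol{T}$.

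I would then set up the substitution map $\boldsymbol{T}[X_1, \ldots, X_n] \to \overline{\boldsymbol{T}(Y_1, \ldots, Y_m)} / \boldsymbol{E}(W)$, $X_i \mapsto f_i$, observe it factors through $\overline{\boldsymbol{T}[X_1, \ldots, X_n]}$ (since $\boldsymbol{E}(\boldsymbol{T}^n)_0$-equivalent polynomials agree at $(f_1(y), \ldots, f_n(y)) \in \boldsymbol{T}^n$), and extend it to fractions (since the image of any nonzero polynomial class is $\boldsymbol{R}$-valued on $W$ hence invertible in the target semifield). The resulting $\sigma : \overline{\boldsymbol{T}(X_1, \ldots, X_n)} \to \overline{\boldsymbol{T}(Y_1, \ldots, Y_m)} / \boldsymbol{E}(W)$ descends through $\boldsymbol{E}(V)$: if $(f, g) \in \boldsymbol{E}(V)$ then for $y \in W$ we have $\sigma(f)(y) = f(\theta(y)) = g(\theta(y)) = \sigma(g)(y)$ since $\theta(y) \in V$. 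Checking values shows that the descended map $\bar\sigma$ coincides with $\theta^{\ast}$ as maps into $F(W, \boldsymbol{T})$, so (2) and (3) both follow.

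Part (4) is immediate: if $\theta^{\ast}(f) = \theta^{\ast}(g)$ and $\operatorname{Im}(\theta) \supset V$, then for any $x = \theta(y) \in V$ we have $f(x) = (f \circ \theta)(y) = (g \circ \theta)(y) = g(x)$, hence $f = g$ in the quotient. For (5), apply the same substitution construction with $(V, X)$ and $(W, Y)$ swapped, using the hypothesized $g_1, \ldots, g_m$, to obtain a $\boldsymbol{T}$-algebra homomorphism $(\theta^{-1})^{\ast} : \overline{\boldsymbol{T}(Y_1, \ldots, Y_m)} / \boldsymbol{E}(W) \to \overline{\boldsymbol{T}(X_1, \ldots, X_n)} / \boldsymbol{E}(V)$, $g \mapsto g \circ \theta^{-1}$. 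Associativity of function composition gives $(\theta^{-1})^{\ast} \circ \theta^{\ast} = \operatorname{id}$ and $\theta^{\ast} \circ (\theta^{-1})^{\ast} = \operatorname{id}$, so $\theta^{\ast}$ is a $\boldsymbol{T}$-algebra isomorphism.

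The main subtle step is verifying in (2) that $\operatorname{Im}(\theta^{\ast})$ actually sits inside $\overline{\boldsymbol{T}(Y_1, \ldots, Y_m)} / \boldsymbol{E}(W)$ rather than merely in $F(W, \boldsymbol{T})$; this is what forces the careful build-up of $\sigma$ through the polynomial, polynomial-function, and rational-function levels, with the semifield property of the target ensuring the extension to fractions is well defined. All the remaining parts then reduce to routine bookkeeping with function composition.
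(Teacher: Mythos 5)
Your proposal is correct and follows essentially the same route as the paper: identify $\theta^{\ast}(f)$ with the substitution $f(f_1, \ldots, f_n)$ to get (2) and (3), check (1) and (4) directly, and prove (5) by constructing $(\theta^{-1})^{\ast}$ from the given $g_1, \ldots, g_m$ and invoking associativity of composition. The only difference is that you build the substitution homomorphism explicitly through the polynomial, polynomial-function, and fraction levels, whereas the paper simply computes $\theta^{\ast}(f)(y) = f(f_1, \ldots, f_n)(y)$ and asserts membership in the quotient; your version makes that well-definedness explicit but is not a different argument.
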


\begin{proof}
$(1)$ Since $W \not= \varnothing$, we have $\operatorname{Im}(\theta) \not= \varnothing$.
By assumption, $\varnothing \not= \operatorname{Im}(\theta) \subset V$.

$(2)$ As any element $f$ of $\overline{\boldsymbol{T}(Y_1, \ldots, Y_m)} / \boldsymbol{E}(W)$ defines a function $W \to \boldsymbol{T}$ and for $f, g \in \overline{\boldsymbol{T}(Y_1, \ldots, Y_m)} / \boldsymbol{E}(W)$, if $f(y) = g(y)$ for any $y \in W$, then $f = g$ as elements of $\overline{\boldsymbol{T}(Y_1, \ldots, Y_m)} / \boldsymbol{E}(W)$, hence the natural map $\overline{\boldsymbol{T}(Y_1, \ldots, Y_m)} / \boldsymbol{E}(W) \to F(W, \boldsymbol{T})$ is injective.

For any $f \in \overline{\boldsymbol{T}(X_1, \ldots, X_n)} / \boldsymbol{E}(V)$ and $y \in W$, since
\begin{align*}
\theta^{\ast}(f)(y) =&~ (f \circ \theta)(y)\\
=&~ f(\theta(y))\\
=&~ f(f_1(y), \ldots, f_n(y))\\
=&~ f(f_1, \ldots, f_n)(y),
\end{align*}
we have $\theta^{\ast}(f) = f(f_1, \ldots, f_n) \in \overline{\boldsymbol{T}(Y_1, \ldots, Y_m)} / \boldsymbol{E}(W)$.

$(3)$ By $(1)$, $V$ is nonempty.
By assumption, $W$ is also nonempty.
Since $\varnothing \not= V \subset \boldsymbol{V}(\boldsymbol{E}(V))$ and $\varnothing \not= W \subset \boldsymbol{V}(\boldsymbol{E}(W))$, by Lemma \ref{lemma3}, both $\overline{\boldsymbol{T}(X_1, \ldots, X_n)} / \boldsymbol{E}(V)$ and $\overline{\boldsymbol{T}(Y_1, \ldots, Y_m)} / \boldsymbol{E}(W)$ are semifields over $\boldsymbol{T}$.
For any $t \in \boldsymbol{T}$, $\theta^{\ast}(t) = t \in \overline{\boldsymbol{T}(Y_1, \ldots,Y_m)} / \boldsymbol{E}(W)$ is clear.
For any $f, g \in \overline{\boldsymbol{T}(X_1, \ldots, X_n)} / \boldsymbol{E}(V)$, we have
\begin{align*}
\theta^{\ast}(f \oplus g) = (f \oplus g) \circ \theta = (f \circ \theta) \oplus (g \circ \theta) = \theta^{\ast}(f) \oplus \theta^{\ast}(g)
\end{align*}
and
\begin{align*}
\theta^{\ast}(f \odot g) = (f \odot g) \circ \theta = (f \circ \theta) \odot (g \circ \theta) = \theta^{\ast}(f) \odot \theta^{\ast}(g).
\end{align*}
Thus $\theta^{\ast}$ is a $\boldsymbol{T}$-algebra homomorphism.

$(4)$ For $f, g \in \overline{\boldsymbol{T}(X_1, \ldots, X_n)} / \boldsymbol{E}(V)$, we assume that $\theta^{\ast}(f) = \theta^{\ast}(g)$ holds.
Since $\operatorname{Im}(\theta) \supset V$, for any $x \in V$, there exists $y \in W$ such that $x = \theta(y)$.
Hence we have 
\begin{align*}
f(x) =&~ f(\theta(y)) = (f \circ \theta)(y) = \theta^{\ast}(f)(y)\\
=&~ \theta^{\ast}(g)(y) = (g \circ \theta)(y) = g(\theta(y)) = g(x).
\end{align*}
Therefore $f = g \in \overline{\boldsymbol{T}(X_1, \ldots, X_n)} / \boldsymbol{E}(V)$.

$(5)$ By assumption and $(2)$, the image of $(\theta^{-1})^{\ast} : \overline{\boldsymbol{T}(Y_1, \ldots, Y_m)} / \boldsymbol{E}(W) \to F(V, \boldsymbol{T}); g \mapsto g \circ \theta^{-1}$ is contained in $\overline{\boldsymbol{T}(X_1, \ldots, X_n)} / \boldsymbol{E}(V)$.
For any $f \in \overline{\boldsymbol{T}(X_1, \ldots, X_n)} / \boldsymbol{E}(V)$, we have
\begin{align*}
((\theta^{-1})^{\ast} \circ \theta^{\ast})(f) =&~ (\theta^{-1})^{\ast}(\theta^{\ast}(f)) = (\theta^{-1})^{\ast}(f \circ \theta)\\
=&~ (f \circ \theta) \circ \theta^{-1} = f \circ (\theta \circ \theta^{-1}) = f \circ \operatorname{id}_{V} = f.
\end{align*}
For any $g \in \overline{\boldsymbol{T}(Y_1, \ldots, Y_m)} / \boldsymbol{E}(W)$, we have
\begin{align*}
(\theta^{\ast} \circ (\theta^{-1})^{\ast})(g) =&~ \theta^{\ast}((\theta^{-1})^{\ast}(g)) = \theta^{\ast}(g \circ \theta^{-1})\\
=&~ (g \circ \theta^{-1}) \circ \theta = g \circ (\theta^{-1} \circ \theta) = g \circ \operatorname{id}_{W} = g.
\end{align*}
Thus we have the conclusion.
\end{proof}

By the following example, we know that we cannot drop the condition ``$\operatorname{Im}(\theta)$ is closed" in Theorem \ref{thm1}(5) and that the converse of Proposition \ref{prop3}(4) does not hold:

\begin{ex}
	\label{ex2}
\upshape{
Let $V := [0, 1] \subset \boldsymbol{R}$ and $W := \{ (t, 1 / t) \in \boldsymbol{R}^2 \,|\, 0 < t \le 1 \}$.
Since $V$ and $W$ are closed subsets of $\boldsymbol{R}$ and $\boldsymbol{R}^2$ respectively, by Proposition \ref{prop1}, these are congruence varieties.
Let $E := \boldsymbol{E}(V) = \{ (f, g) \in \overline{\boldsymbol{T}(X)} \times \overline{\boldsymbol{T}(X)} \,|\, \forall x \in V, f(x) = g(x) \}$ and $F := \boldsymbol{E}(W) = \{ (f, g) \in \overline{\boldsymbol{T}(Y_1, Y_2)} \times \overline{\boldsymbol{T}(Y_1, Y_2)} \,|\, \forall y \in W, f(y) = g(y) \}$.
Then $V = \boldsymbol{V}(E)$ and $W = \boldsymbol{V}(F)$.
Let $\pi_E : \overline{\boldsymbol{T}(X)} \twoheadrightarrow \overline{\boldsymbol{T}(X)} / E$ and $\pi_F : \overline{\boldsymbol{T}(Y_1, Y_2)} \twoheadrightarrow \overline{\boldsymbol{T}(Y_1, Y_2)} / F$ be the natural surjective $\boldsymbol{T}$-algebra homomorphisms and $\theta : W \to V; y \mapsto \pi_F(Y_1)(y)$.
Then $\operatorname{Im}(\theta) = (0, 1] \not\supset V$.
Let $\theta^{\ast} : \overline{\boldsymbol{T}(X)} / E \to \overline{\boldsymbol{T}(Y_1, Y_2)} / F; f \mapsto f \circ \theta$ be the pull-back.
This $\theta^{\ast}$ is injective by the following argument.
Let $f, g \in \overline{\boldsymbol{T}(X)} / E$ such that $\theta^{\ast}(f) = \theta^{\ast}(g)$.
Since $f \circ \theta = \theta^{\ast}(f) = \theta^{\ast}(g) = g \circ \theta$ and $\operatorname{Im}(\theta) = (0, 1]$, the restrictions $f|_{(0, 1]}$ and $g|_{(0, 1]}$ coincide.
Thus, if $f = -\infty$, then so is $g$, and hence $f = g$.
If $f \not= -\infty$, then $g \not= -\infty$ and $f(0), g(0) \in \boldsymbol{R}$.
Since both $f$ and $g$ are continuous, $f(0) = g(0)$.
This means that $f|_V = g|_V$.
As $E = \boldsymbol{E}(V)$, we have $f = g$.
In conclusion, $\theta^{\ast}$ is injective.
Note that $\theta^{\ast}(\pi_E(X)) = \pi_F(Y_1)$.
Also $\pi_F(Y_2) \not\in \operatorname{Im}(\theta^{\ast})$.
In fact, if there exists $f \in \overline{\boldsymbol{T}(X)} / E$ such that $\theta^{\ast}(f) = \pi_F(Y_2)$, then $\theta^{\ast}(f)(t, 1/t)$ diverges to $\infty$ as $t \to 0$.
}
\end{ex}

By the following example, we know that we cannot drop the condition ``there exist $g_1, \ldots, g_m \in \overline{\boldsymbol{T}(X_1, \ldots, X_n)} / \boldsymbol{E}(V)$ such that for any $x \in V$, $\theta^{-1} (x) = (g_1(x), \ldots, g_m(x))$" in Proposition \ref{prop3}(5) and that the condition ``$V$ and $W$ are homeomorphic via $\theta$" is not a sufficient condition for ``$\overline{\boldsymbol{T}(X_1, \ldots, X_n)} / \boldsymbol{E}(V)$ and $\overline{\boldsymbol{T}(Y_1, \ldots, Y_m)} / \boldsymbol{E}(W)$ are isomorphic via $\theta^{\ast}$":

\begin{ex}
	\label{ex3}
\upshape{
Let $V := [0, 2] \subset \boldsymbol{R}$ and $W := [0, 1] \subset \boldsymbol{R}$.
Let $\pi_{\boldsymbol{E}(W)} : \overline{\boldsymbol{T}(Y)} \twoheadrightarrow \overline{\boldsymbol{T}(Y)} / \boldsymbol{E}(W)$ be the natural surjective $\boldsymbol{T}$-algebra homomorphism.
Let $\theta : W \to V; x \mapsto \pi_{\boldsymbol{E}(W)}(Y^{\odot 2})(x)$ and $\theta^{\ast} : \overline{\boldsymbol{T}(X)} / \boldsymbol{E}(V) \to \overline{\boldsymbol{T}(Y)} / \boldsymbol{E}(W)$ the pull-back.
This $\theta$ is a homeomorphism.
However there exists no element of $\overline{\boldsymbol{T}(X)} / \boldsymbol{E}(V)$ that induces $\theta^{-1}$.
Also $\pi_{\boldsymbol{E}(W)}(Y) \not\in \operatorname{Im}(\theta^{\ast})$ in this case.
Hence $\theta^{\ast}$ is not surjective.
}
\end{ex}

\end{document}